\documentclass[reqno]{amsart}
\usepackage{amsmath}
\usepackage{mathtools}

\usepackage{amsthm}
\usepackage{tikz}
\usetikzlibrary{arrows}

\usepackage[colorlinks=true]{hyperref}
\usepackage[all]{xypic}
\usepackage{graphicx}
\newtheorem{theorem}{Theorem}
\newtheorem{proposition}{Proposition}
\newtheorem{corollary}{Corollary}

\theoremstyle{definition}
\newtheorem{definition}{Definition}

\theoremstyle{remark}
\newtheorem{remark}{Remark}

\newtheorem{example}{Example}

\newcommand{\defin}[1]{{\it #1}}
\newcommand{\R}{\mathbb{R}}
\newcommand{\N}{\mathbb{N}}

\newcommand{\Q}{\mathbb{Q}}

\newcommand{\C}{\mathcal{C}}
\newcommand{\A}{\mathcal{A}}

\newcommand{\uni}{\mathfrak{m}}
\newcommand{\Leb}{\lambda}

\makeatletter
\renewcommand\section{\@startsection
	{section}
{1}
{0pt}
{-3.5ex plus -1ex minus -.2ex}
{2.3ex plus.2ex}
{\centering\normalfont\Large\scshape}}

\renewcommand\subsection{\@startsection
	{subsection}
{2}
{0pt}
{-3ex plus -1ex minus -.2ex}
{1ex plus.2ex}
{\normalfont\large\bfseries}}

\renewcommand\subsubsection{\@startsection
	{subsubsection}
{3}
{0pt}
{-1.5ex plus -1ex minus -.2ex}
{0.8ex plus .2ex}
{\normalfont\bfseries}}

\renewcommand\paragraph{\@startsection
	{paragraph}
{4}
{0em}
{-1.2ex plus -0.4ex minus -.2ex}
{0\baselineskip}
{\normalfont\bfseries}}

\makeatother

\newcounter{claimcount}
\newcommand{\THMfont}[1]{{\sl #1}}
\newcommand{\Claim}[1]{\refstepcounter{claimcount} \vspace{0.3em}               \noindent {\sc Claim \theclaimcount: \ }\THMfont{ #1}}
\newcommand{\bprf}[1][Proof:]{\begin{list}{}    {\setlength{\leftmargin}{0.5em} \setlength{\rightmargin}{0em}  \setlength{\listparindent}{1em}}   \item {\em \hspace{-0.8em}  #1  }}
\newcommand{\eprf}{\end{list}}
\newcommand{\bclaimprf}{\bprf}
\newcommand{\eclaimprf}{ \hfill $\Diamond$~{\scriptsize {\tt Claim~\theclaimcount}}\eprf} 

\title[Algorithmic complexity of attractors]{On the algorithmic descriptive complexity of attractors in topological dynamics}

\author{Crist\'obal Rojas}
\address{Crist\'obal Rojas, Instituto de Ingenier\'ia Matem\'atica y Computacional, Universidad Cat\'olica de Chile}

\author{Mathieu Sablik}
\address{Mathieu Sablik, IMT, Université Toulouse III - Paul Sabatier, Toulouse, France}
\thanks{Crist\'obal Rojas was partially supported by Grants ANID/FONDECYT Regular 1230469 and ANID/Basal National Center for Artificial Intelligence CENIA FB210017. Mathieu Sablik was partially supported by ANR project Difference (ANR-20-CE48-0002) and the project Computability of asymptotic properties of dynamical systems from CIMI Labex (ANR-11-LABX-0040).}

\begin{document}
\maketitle

\begin{abstract}

We study the computational problem of rigorously describing the asymptotic behaviour of topological dynamical systems up to a finite but arbitrarily small pre-specified error. More precisely, we consider the limit set of a typical orbit, both as a spatial object (attractor set) and as a statistical distribution (physical measure), and prove  upper bounds on the computational resources of computing descriptions of these objects with arbitrary accuracy. We also study how these bounds are affected by different dynamical constrains and provide several examples showing that our bounds are sharp in general. In particular, we exhibit a computable interval map having a  unique transitive attractor with Cantor set structure supporting a unique physical measure such that both the attractor and the measure are non computable.  
\end{abstract}


\section{Introduction}

Dynamical Systems are abstract mathematical models of systems whose state evolves according to a prescribed rule. They are often used to model natural phenomena, and have an enormous range of applications. This versatility, however, makes them notoriously hard to analyse in general. The last decades have seen an increasing use of computers in the study and analysis of dynamical systems, which have resulted in a number of theoretical breakthroughs; a notable example is the role computers played in both the discovery of the Lorenz attractors through numerical simulations \cite{Lor63}, and the mathematical proof (40 years later) of their existence \cite{Tuc02} in which computers were used to assist the proof by verifying certain quantitative conditions to hold through certified computations. In fact, the use of computers to explore the properties of dynamical systems so as to make conjectures about their behaviour and even to find ideas of how to prove those conjectures, is becoming increasingly popular among working mathematicians.

While there exists countless papers in the literature about computations in dynamical systems, only a small fraction of them addresses  the problem rigorously; i.e., how far is the sought actual quantity from the computed one? And can a computer really perform such computation up to a very small pre-specified error? 

An important example where these questions are of interest is given by the \textit{reachability} problem: given a finite description of a system, can we use a computer-aided procedure to guarantee that if the system starts at a given initial state, its evolution will never reach some ``unsafe region'' \cite{AsMalAm95,Col05} ?  

Several results have been obtained in this direction. In general, while bounded-time simulations are usually possible, the long-term behavior features of many of the interesting systems may be very hard to compute. 

The idea here is that, since rich enough physical systems can, in theory, perform universal computation in the sense that they can simulate any Turing machine \cite{Neumann66,Minsky:67,Sontag:91,graca2021analytic,Cardona_2021,Cardona2_2021}, the computational unsolvability of problems like the Halting Problem \cite{Turing1936} entails the computational unpredictability of this kind of systems --- most of their long term properties are non-computable \cite{Mo91,Koiran:94,Reif:1994,AsMalAm95,BY06,BraYam07,GalHoyRoj07c}.  

On the other hand, it has also been observed that the computational capabilities of these systems may be affected by restricting some of the features related to their physical plausibility, such as  degrees of freedom, compactness, regularity or robustness --- the long term behaviour of such restricted systems  may be easier to predict \cite{Navot04, Braver12, PRL15, BRS17, Asa01, Cardona_23}.  

The mathematical theory of dynamical systems provides a landscape of dynamical properties that a system may exhibit in the long-term. At one end, there are systems with a rather ``ordered'' dynamics, in the sense that most nearby initial conditions remain close in time and give rise to individual trajectories whose common behavior is easy to describe, e.g. when most of initial conditions converge to the same fixed point (or periodic orbit). In particular, in this case the reachability problem typically becomes decidable, and it is usually possible to give a rigorous computational description of the limiting dynamics \cite{BGH10, Asa01}.

At the other end, we have chaotic systems, where sensitivity to initial conditions makes it impossible to in practice simulate individual trajectories for an extended period of time. In this case, instead of attempting to describe individual trajectories, one should study the limit set of collections of typical trajectories -- both as a spatial and as a statistical invariant object \cite{Palis}. The structure of these \textit{attractors}  encapsulates the properties of the asymptotic behaviour of the underlying systems, and constitute therefore the objects that should be sought to be computed for their analysis and prediction.

A number of results have also been obtained in this direction, showing a variety of possibilities from the point of view of the algorithmic complexity of describing the limiting behavior of a typical orbit \cite{BY06,Hellouin-Sablik-2018,GRZ18,YRAdvances19,RY_2020}.

 In this paper we introduce a general framework based on computable analysis to study and classify dynamical systems according to the computational difficulty of computing arbitrarily good approximations of their attractors. While in general a dynamical system may have several (even infinitely many) different attractors, in this paper we choose to focus on the case of systems whose overall dynamical picture is ``simple" in the sense that they have a \emph{unique} well defined limiting behavior, which is followed by all ``typical" initial conditions. Our main goal is to obtain insight on the computational difficulty of the problem of describing this unique limiting behavior, and on how it depends on the dynamical properties of the system. 

 The case of systems having more than one typical asymptotic behavior is also interesting and we plan to study it in future work.  

\bigskip

\noindent\textbf{Structure of the paper and main results.} We begin with a discussion on the concept of attractor in Section~\ref{section.DefAttractor}, where the three versions to be studied -- topological, metric, and statistical attractors -- are introduced. The language we use to bound their algorithmic complexity is introduced in Section~\ref{section.ComputableAnalysis}. It is intended to capture the difficulty of computing a complete description of these attractors as closed sets. Modulo some technicalities, it essentially amounts to determining the complexity in the arithmetical hierarchy of the problem of deciding whether a given rational ball intersects the set. 

Section \ref{section.UpperBounds} is devoted to provide general upper bounds on the complexity of computing the attractor, for each of the three different types introduced. We also study the effect that different dynamical properties have on these bounds. While in general \emph{transitivity} does not affect them, other properties such as being \emph{strongly attracting, minimal or exact}, force the attractor to be strictly less complex. 

Section ~\ref{section.symbolic} contains the main contributions of this paper. It is devoted to show that the upper bounds we obtain in Section \ref{section.UpperBounds} are tight. All the examples in this section are within the class of computable symbolic systems. Their symbolic nature makes them suitable for constructing examples of systems whose attractors are maximally complex, but whose overall dynamics is still possible to understand, thus providing insight into how algorithmic complexity is built into the attractor by the evolution of the system's dynamics. Our examples show that for each type of attractor, there are computable systems with an attractor of the corresponding type which is \emph{complete} for the upper bounds given in the previous section. While in many natural cases these three types of attractors coincide (and have therefore the same complexity), in general they can be different. These are known as \emph{wild} attractors. In Theorem~\ref{Corollary.DifferentAttractors}, we show that systems with wild attractors can have extremely different algorithmic complexities. In particular, we construct examples of systems whose \emph{metric} (respectively, topological) attractor is maximally complex ($\Pi_2$-complete), whereas its \emph{topological} (respectively, metric) attractor is computationally extremely simple. 

Finally, in Section \ref{interval} we explore this question for computable transformations of the unit interval. It is natural to expect that the more restricted topological nature of these maps might impose additional computability constrains on their attractors, as is for instance the case for topological entropy \cite{topentr}. However, at least for metric attractors, we show that this is not the case by exhibiting computable interval maps with different dynamical properties with a metric attractor matching the corresponding upper bound.  

\medskip
\noindent\textbf{Related work.} These results raise a lot of interesting questions. For example, to study whether our upper bounds are tight for more restricted classes of systems. Inspired by a preliminary version of the present work, this question has been studied for the class of cellular automata~\cite{Esnay-Ninez-Torma}, where several examples are provided. Another interesting direction is to study classes of systems that are restricted by analytic properties, such as their degree of regularity. In the unit interval for example, it was shown in \cite{YRAdvances19} that attractors of maps from the logistic family are always computable sets, but the problem is largely open for more general smooth maps. Other dynamical properties, such as some form of hyperbolicity, should also make attractors easier to compute. The geometric Lorenz attractor for example was shown to be computable in \cite{GRZ18} by exploiting this property. More generally, according to the \emph{physical} Church-Turing thesis, anything that can be computed by a physical device (however it may be constructed), can also be computed by a Turing Machine. Therefore, ``physically plausible" systems should have computable attractors. Thus, a very interesting challenge is to find a set of dynamical conditions that make the attractor computable, and that is applicable to a large set of systems. Finally, an important related question, that we do not explore here, is to characterize the algorithmic complexity of attractors for \emph{generic} maps within a given family of dynamical systems.

\section{Preliminary definitions}\label{section.Def}

\subsection{On the concept of Attractor}\label{section.DefAttractor}

Let $T:X\to X$ be a continuous transformation over a compact set $X$ on which a natural reference measure $\Leb$ can be defined, that we will call Lesbesgue measure. The idea of the notion of attractor is simple:  they are invariant sets supporting the asymptotic dynamics of typical orbits. Although the intuition behind this idea is quite clear, it is not so easy to provide a precise definition and, in fact, several different definitions exists in the literature--we refer the reader to \cite{Milnor-attractor}, where an extended discussion of these issues is presented. In all cases, it seems that  any reasonable definition of attractor should at least include the following two points:


\begin{enumerate}
\item  an attractor must describe the limiting behavior of a large set of initial conditions (called its basin of attraction); and
\item all parts of the attractor should play a role in describing the limiting behavior of the points in its basin of attraction. 
\end{enumerate}

While there are many examples of systems possessing several (even infinitely many) different attractors, we see this phenomenon as a strong source of additional complexity in the description of the limiting behaviour of the system, which we would like to avoid in the analysis we do in this work. Therefore, we choose to focus our study on systems which are ``well behaved'' in the sense that they exhibit a \emph{unique typical limiting behavior} which is followed by most initial conditions. Our goal is to understand the computational difficulty of describing this well defined, unique limiting behavior.  

In formalising the two points above in our intuitive definition of attractor, the idea of ``large" will mean either \emph{generic}\footnote{A set is \emph{generic} if it contains a countable intersection of open and dense sets. The complement of a generic set is called \emph{meager}.} or \emph{full measure} with respect to some natural reference measure, that we will denote by $\lambda$. 

In addition, one can also consider (at least) two different ways in which the attractor may \emph{describe} the limiting behavior. The first possibility is to consider a topological description. Recall that $\omega(x)$ denotes the set of accumulation points of the sequence  $x, T(x), T^{2}(x), \dots$ of iterates of $x$ under  $T$.  For a closed invariant set $A\subset X$, the \defin{realm of topological attraction}\footnote{We do note require it to be an open set, which is why we do not call it the basin.} of $A$ is defined to be the set 
$$
\rho(A) = \{ x : \omega(x)\subset A\}. 
$$


Following Milnor \cite{Milnor-attractor}, we define

\begin{definition}{\label{attractor_top}} A closed set $A\subset X$ will be  called a  \defin{metric attractor} if it satisfies two conditions:
\begin{itemize}
\item its topological basin of attraction $\rho(A)$, has full measure; and
\item there is no strictly smaller closed set $A'\subset A$ for which $\rho(A')$ has positive measure.  
\end{itemize}
Similarly, $A$ will be called a  \defin{topological attractor} if 
\begin{itemize}
\item its topological basin of attraction $\rho(A)$ is generic; and 
\item there is no strictly smaller closed set $A'\subset A$ so that $\rho(A')$ is not meager.  
\end{itemize} 
\end{definition}



The second possibility is to consider that attractors describe the asymptotic dynamics in a statistical way.  For a given point $x\in X$, consider the sequence of measures 
$$
\nu_{n}(x) = \frac{1}{n} \sum_{i<n} \delta_{T^{i}(x)}
$$ 
and call $\nu_{x}$ its limit in the weak* topology of measures, provided of course that this limit exists.  The measure $\nu_{x}$ thus describes the asymptotic statistical distribution of the system when started at $x$.  

\begin{definition}{\label{attractor_stat}}  A closed set $A\subset X$ is called a \defin{statistical (or physical) attractor}, if there exists a unique invariant measure $\mu$ such that: 
\begin{itemize}
\item[i)] $\text{supp}(\mu) = A$; and
\item[ii)]  $\mu = \nu_{x}$ for almost every $x$ (with respect to the reference measure $\lambda$). 
\end{itemize}
\end{definition}


The measure $\mu$ in the definition above describes the statistics of the limiting behavior when the system is started at a random initial condition. Such a measure is usually referred to as  \emph{Physical} \cite{SRB02}.

In addition to the  requirements of the previous definitions (in its topological or statistical versions), we are interested in how the computational complexity of the is affected by the dynamical properties of the system. Therefore, we will consider the following additional properties that constrain the behavior of the system in different ways:

\begin{definition}An attractor $A$ (of any kind) will be said to be 
\begin{itemize}
\item \defin{strongly attracting} if there exists an open neighbourhood $U$ of $A$ such that $T(\overline{U})\subset U$ and $A= \bigcap_{n} T^{n}(U)$; 
\item \defin{minimal} if the orbit of every point  in $A$ is dense in $A$;
\item \defin{transitive} if there exists a point in $A$ whose orbit is dense in $A$; 
\item \defin{exact} if every ball $B$ intersecting $A$ has an iterate  $ T^{n}B$ that covers $A$. 
\end{itemize}
\end{definition}

These are classical dynamical properties in the theory of dynamical systems. For example, attractors of hyperbolic systems are typically strongly attracting and transitive \cite{Bow75a}, whereas attractors in the logistic family are typically transitive and exact, and sometimes even minimal, but never strongly attracting unless they are finite. 

%
%
%
%
%
%

 \subsection{Rudiments of Computable Analysis}\label{section.ComputableAnalysis}

In this section, we recall notions from
computable analysis on metric spaces.
All the results presented here are well known.
For a detailed modern exposition we refer to \cite{Handbook}. In the following, we will make use of the word \textit{algorithm} to mean a computer program written in any standard programming language or, more formally, a Turing Machine \cite{Turing1936}. Algorithms are assumed to be only capable of manipulating integers. By identifying countable sets  with integers in a constructive way, we can let algorithms work on these countable sets as well.  For example, algorithms can manipulate rational numbers by identifying each $p/q$ with some integer $n$ in such a way that both $p$ and $q$ can be computed from $n$, and vice-versa. We fix such a numbering from now on.

\subsubsection{Computable metric spaces.}

\begin{definition}
A \defin{computable metric space} is a triple $(X,d,\mathcal{S})$, where $(X,d)$ is a metric space
and $\mathcal{S} = \{s_i : i \ge 0\}$ a countable dense subset of $X$,
whose elements are called \defin{ideal points},
such that there exists an algorithm
which, upon input $(i,j,n) \in \N^{3}$,
outputs  $r\in \Q$ such that
$$|d(s_i,s_j)-r| \le 2^{-n}.$$
We say that the distances between ideal points
are \emph{uniformly computable}.
\end{definition}

For $r>0$ a
rational number and $x$ an element of $X$,
we denote by $B(x,r) = \{ z \in X \ : \ d(z,x)<r\}$ and by $\overline{B}(x,r) = \{ z \in X \ : \ d(z,x)\leq r\}$
respectively the open and closed ball with center $x$ and radius $r$. The balls centered on elements of
$\mathcal{S}$ with rational radii are called \defin{ideal balls}. A computable enumeration of the ideal balls $B_{n}=B(s^{(n)},r^{(n)})$ can be obtained by taking for instance a bi-computable bijection $\varphi:\N \rightarrow
\N \times \Q$ and letting $s^{(n)} = s_{\varphi_1 (n)}$ and $r^{(n)} = \varphi_2 (n)$, where $\varphi(n) = (\varphi_1 (n),\varphi_2 (n))$.  We fix such a computable enumeration from now on. For any
subset $I$ (finite or infinite)
of $\N$, we denote $\mathcal{U}_{I}$
the collection of ideal balls $B_n$ with $n \in I$, and
$U_{I}$ the union of these balls:
\[U_{I} = \bigcup_{n \in I} B_n.\]
An open set $V\subset X$ is \defin{lower computable (or r.e.)} if $V=U_I$ for some recursively enumerable (r.e.) $I\subset \N$. That is, there is an algorithm which halts on some $n\in\N$ if and only if $n\in I$. A point $x\in X$ is \defin{computable} if $\{x\}=\bigcap_{n\in I} B_n$ for  some r.e. $I\subset\N$.

\begin{example}The following examples will be important for us. In particular, example $c)$ provides the definition of computable probability measure.  
\begin{itemize}
\item[a)] For a finite alphabet  $\mathcal{A}$
and $0$ one of its elements, the Cantor space $\mathcal{A}^{\N}$ with its usual
metric has a natural computable metric space structure where the ideal points can be taken to be $\mathcal{S} = \{w0^{\infty}:
w \in \mathcal{A}^{*}\}$, where $\mathcal{A}^*$ denotes the set of all finite words one can make with the symbols of $\mathcal{A}$.  In this case, the ideal balls are
the cylinders (see \ref{symbolic_notation}).
\item[b)] The compact interval $[0,1]$, with its usual metric
and $\mathcal{S}=\Q\cap [0,1]$ is also
a computable metric space. The ideal balls here are the
open intervals with rational endpoints.
\item[c)] If $X$ is a compact computable metric space, then the set $M_X$ of probability measures over $X$ can also be made a computable metric space \cite{GalHoyRoj07c} with a metric that induces the weak topology on $M_X$. In particular, we automatically obtain a notion of \defin{computable probability measure}, namely, those corresponding to the computable elements of this space.  
\end{itemize}
\end{example}

Regarding the notion of computable measure alluded to in the preceding example, we will only use the following characterization (see for example \cite{HoyRoj07}). 

\begin{proposition}\label{prop:measures}Let $\mu$ be a probability measure over a computable metric space $X$. Then, $\mu$ is computable if and only if $\mu(B_{i_1}\cup B_{i_2}\cup \dots \cup B_{i_n})$ is lower computable uniformly in $i_1,\dots, i_n$ in the sense that there exists an algorithm which, upon input  $i_1,\dots, i_n$ and $k$, outputs a rational number whose supremum over $k$ equals $\mu(B_{i_1}\cup B_{i_2}\cup \dots \cup B_{i_n})$.
\end{proposition}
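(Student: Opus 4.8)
The statement is a standard fact in computable analysis, and the plan is to reduce both implications to two classical facts about the computable structure placed on the space $M_X$ of probability measures over $X$ (see \cite{GalHoyRoj07c,HoyRoj07}): (i) $\mu$ is a computable point of $M_X$ if and only if $\int g\,d\mu$ is a computable real, uniformly in a description of the computable function $g\in C(X)$; and (ii) a point of a computable metric space is computable if and only if its distances to the ideal points are uniformly computable. Granting (i), the content left to prove is the equivalence between ``$\int g\,d\mu$ is uniformly computable over computable $g\in C(X)$'' and ``$\mu(B_{i_1}\cup\cdots\cup B_{i_n})$ is uniformly lower computable over finite tuples of ideal balls'', and this is what I would establish directly.

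For the forward implication, assume $\int g\,d\mu$ is uniformly computable over computable $g$. I would fix ideal balls $B_{i_j}=B(s^{(i_j)},r^{(i_j)})$, $1\le j\le n$, put $V=\bigcup_j B_{i_j}$, and introduce, for rational $\delta>0$, the explicit Urysohn-type function
\[
f_\delta(x)=\max_{1\le j\le n}\max\Bigl(0,\ \min\Bigl(1,\ \frac{2}{\delta}\bigl(r^{(i_j)}-\frac{\delta}{2}-d(x,s^{(i_j)})\bigr)\Bigr)\Bigr),
\]
which is a computable element of $C(X)$, uniformly in $\delta$ and in the tuple $(i_1,\dots,i_n)$, since the distances to the $s^{(i_j)}$ are uniformly computable. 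One checks that $\mathbf{1}_{V_\delta}\le f_\delta\le\mathbf{1}_{V}$ with $V_\delta=\bigcup_j\overline{B}(s^{(i_j)},r^{(i_j)}-\delta)$, and that $V_\delta\uparrow V$ as $\delta\downarrow 0$, so continuity of $\mu$ from below gives $\mu(V)=\lim_{\delta\to0^+}\mu(V_\delta)=\sup_\delta\int f_\delta\,d\mu$. The algorithm is then: on input $(i_1,\dots,i_n,k)$, compute a rational $a$ with $|a-\int f_{2^{-k}}\,d\mu|\le 2^{-k}$ and output $a-2^{-k}$. Each output is $\le\int f_{2^{-k}}\,d\mu\le\mu(V)$, while it is $\ge\mu(V_{2^{-k}})-2^{-k+1}$, which tends to $\mu(V)$; hence the supremum over $k$ of the outputs is exactly $\mu(V)$.

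For the converse, assume the displayed uniform lower-computability. First I would note that for any r.e.\ open $U=\bigcup_m B_{j_m}$ (with $(j_m)$ computable), $\mu(U)=\sup_m\mu(B_{j_1}\cup\cdots\cup B_{j_m})$ is a supremum of a computable array of lower-computable reals, hence lower computable, uniformly in an index for $U$. Next, for a bounded $f\colon X\to[0,C]$ such that $\{f>t\}$ is r.e.\ open uniformly in rational $t$, the layer-cake formula $\int f\,d\mu=\int_0^C\mu(\{f>t\})\,dt$, approximated by lower Riemann sums $\sum_i(t_i-t_{i-1})\,\mu(\{f>t_i\})$ over rational partitions of $[0,C]$, shows that $\int f\,d\mu$ is lower computable, uniformly in $f$. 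Finally, let $g\in C(X)$ be computable with $|g|\le C$ on the compact space $X$; then $g+C$ and $C-g$ both satisfy the previous hypothesis (for a computable function the superlevel sets $\{g>t\}$ and $\{g<t\}$ are r.e.\ open uniformly in $t$), so $\int(g+C)\,d\mu$ and $\int(C-g)\,d\mu$ are lower computable, and since $\mu$ is a probability measure these two reals sum to the computable constant $2C$; hence each is computable, and so is $\int g\,d\mu$, uniformly in a description of $g$. By fact (i), $\mu$ is a computable point of $M_X$.

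I expect the forward implication to be the only genuine obstacle: the weak topology on $M_X$ records only integrals of continuous functions, whereas $\mu\mapsto\mu(V)$ is merely lower semicontinuous, so $\mu(V)$ cannot be read off directly and must be squeezed from below between the integrals of the explicit functions $f_\delta$ and the measures of the shrunken closed sets $V_\delta$; the care required lies precisely in making this approximation effective and uniform in all parameters. In the converse, the only delicate point is the interplay between r.e.\ open sets and the layer-cake representation, and it is exactly there that the hypothesis that $\mu$ is a \emph{probability} (hence finite) measure is used, to promote a two-sided lower bound into genuine computability.
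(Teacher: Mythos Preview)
The paper does not actually prove this proposition: it is stated as a known characterization with a reference to \cite{HoyRoj07}, and no argument is given. Your proposal supplies a correct and standard proof via the equivalence between computability of $\mu$ and uniform computability of $g\mapsto\int g\,d\mu$, using explicit Urysohn approximants for the forward direction and the layer-cake formula together with the ``two lower-computable reals summing to a computable constant'' trick for the converse. One small remark: you invoke compactness of $X$ to bound $|g|$, while the proposition as stated does not assume it; however, the computable structure on $M_X$ in this paper (Example~(c)) is set up only for compact $X$, so this is consistent with the intended context. Fact~(ii) in your plan is not used and can be dropped.
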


Let $(X,d)$ and $(X',d')$ be computable metric spaces. Let us denote by $(B'_m)_m$
an enumeration of ideal balls of $X'$. A function $T : X \rightarrow X'$ is
\defin{computable} if there exists an algorithm
which, given as input some integer $m$,
enumerates a set $I_m$
such that $$T^{-1} (B'_m) = U_{I_m}.$$

It follows that computable functions are continuous. It is perhaps more intuitively familiar, and provably equivalent, to think of a computable function as one for which there is an algorithm which, provided with arbitrarily good approximations of $x$, outputs arbitrarily good approximations of $T(x)$.  In symbolic spaces, this can easily be made precise:

\begin{example}
In Cantor space $\mathcal{A}^{\N}$, a function
$T : \mathcal{A}^{\N} \rightarrow \mathcal{A}^{\N}$ is computable if and only if there exists some non decreasing
computable function $\varphi : \N \rightarrow \N$ together with an algorithm which, provided with the $\varphi(n)$ first symbols of the sequence $x$, computes the $n$th first symbols of the sequence $T(x)$.
\end{example}

A function $T$ is \defin{effectively open} if there is an algorithm which given as input some integer $m$, enumerates a set $I_m$ such that $T(B_m)=U_{I_m}$.

A compact set $K\subset X$ is said to be \defin{recursively compact}
if the inclusion $$K \subset U_I,$$
where $I$ is some finite subset of $\N$, is semi-decidable. That is, if there is an algorithm which, given  $I$ as input, halts if and only if
the inclusion above is verified. 

\begin{example}
The Cantor space and the compact interval
are easily seen to be recursively compact.
\end{example}


A closed subset $E \subset X$ is said to be \defin{upper computable (or effective, or co-r.e.)} if its complement $X \backslash K $ is a lower computable open set.  It is called \defin{lower computable} if the relation $E\cap B_i\neq \emptyset$ can be semi-decided uniformly for all ideal balls $B_i$. Finally, $E$ is called \defin{computable} if it is both lower and upper computable. 

We will also make use of the following fact. We include a proof for the reader's convenience.  

\begin{proposition}
\label{prop.equivalence.comp.closed.subset}
Let $(X,d,\mathcal{S})$ be a recursively compact
computable metric space.
A closed subset $K \subset X$ is  upper computable  if and
only if it is recursively compact. 
\end{proposition}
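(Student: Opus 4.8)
The plan is to prove both implications by exploiting the two available "semidecision" streams: the recursive compactness of the ambient space $X$ (covers can be verified) and, in one direction, the hypothesized recursive compactness of $K$, and in the other direction, the lower computability of the complement $X\setminus K$.

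First I would show that if $K$ is upper computable then it is recursively compact. Suppose $X\setminus K = U_J$ for some r.e.\ set $J$, and suppose we are given a finite set $I$ with $K\subset U_I$ to verify. The key observation is that $K\subset U_I$ holds if and only if $X = U_I\cup U_J = U_{I\cup J}$; indeed $U_I\cup U_J \supset K\cup(X\setminus K)=X$ iff every point not covered by $U_I$ lies in $X\setminus K=U_J$, which is exactly $K\subset U_I$. Now $X$ is recursively compact, so the inclusion $X\subset U_{I\cup J}$ over \emph{finite} sub-collections is semidecidable; and since $J$ is r.e., we can enumerate its finite initial segments $J_1\subset J_2\subset\cdots$ with $\bigcup_k J_k = J$. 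The algorithm to semidecide $K\subset U_I$ then runs the recursive-compactness test of $X$ on $I\cup J_k$ for $k=1,2,\dots$ in a dovetailed fashion, and halts if any of these halts. If $K\subset U_I$ truly holds, then $U_I\cup U_J = X$; by compactness of $X$ a finite subcover exists, hence $U_I\cup U_{J_k}\supset X$ for some $k$, so the test on $I\cup J_k$ eventually halts. Conversely if the test halts on some $I\cup J_k$, then $X\subset U_{I\cup J_k}\subset U_I\cup U_J$, forcing $K\subset U_I$. This proves semidecidability of the inclusion, i.e.\ recursive compactness of $K$.

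Conversely, suppose $K$ is recursively compact; I want to show $X\setminus K$ is a lower computable open set, i.e.\ I must enumerate all ideal balls $B_m$ contained in $X\setminus K$, or more precisely produce an r.e.\ set $I$ with $U_I = X\setminus K$. The idea is: an ideal ball $B_m$ satisfies $\overline{B}(s,r)\subset X\setminus K$ precisely when $K$ is covered by finitely many ideal balls all disjoint from $\overline{B}(s,r)$. Concretely, for a candidate closed ideal ball $\overline{B}(s_m, r_m)$, search over all finite sets $I'$ of ideal balls such that (i) each $B_n$ with $n\in I'$ is "verifiably disjoint" from $\overline{B}(s_m,r_m)$ — which one can semidecide using uniform computability of distances between ideal points together with the triangle inequality, since $d(s_n, s_m) > r_n + r_m$ is a semidecidable condition on rationals obtained from the approximations — and (ii) $K\subset U_{I'}$, which is semidecidable by recursive compactness of $K$. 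Dovetailing this search over all finite $I'$, we enumerate exactly those closed ideal balls contained in $X\setminus K$. The union of these closed balls (equivalently, of slightly smaller open ideal balls chosen so the radii are rationally controlled) is all of $X\setminus K$: any point $x\notin K$ has positive distance $\delta$ to the compact set $K$, so a small enough ideal ball around an ideal point near $x$ is contained in $X\setminus K$ and its closure is verifiably disjoint from a finite cover of $K$ by ideal balls of radius $<\delta/3$. Hence $X\setminus K$ is lower computable open, i.e.\ $K$ is upper computable.

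The main obstacle is the second direction, specifically arranging the "verifiable disjointness" of an ideal ball from the closed ball $\overline{B}(s_m,r_m)$ purely from the semidecidable data at hand. The subtlety is that "$\overline{B}(s_n,r_n)\cap\overline{B}(s_m,r_m)=\emptyset$" is not in general semidecidable as stated, but the \emph{sufficient} condition $d(s_n,s_m) > r_n+r_m$ is semidecidable from the uniformly computable distance approximations, and this suffices because one only needs \emph{enough} verifiably-disjoint balls to cover $K$, not a canonical cover. One must also take a little care to note that whenever $\overline{B}(s_m,r_m)\subset X\setminus K$ genuinely holds, $\mathrm{dist}(\overline{B}(s_m,r_m),K)>0$ by compactness of $K$, so a cover of $K$ by ideal balls of small enough rational radius does satisfy the strict-inequality disjointness condition and will therefore be found by the search. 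Packaging this, together with the standard fact that every open set in a computable metric space is a union of ideal balls, gives the enumeration of $X\setminus K$.
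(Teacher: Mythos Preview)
Your proof is correct and follows essentially the same route as the paper's. The direction ``upper computable $\Rightarrow$ recursively compact'' is identical: $K\subset U_I$ iff $U_I\cup(X\setminus K)$ covers $X$, and the latter is semi-decidable by recursive compactness of $X$. For ``recursively compact $\Rightarrow$ upper computable'' the paper argues more tersely---it observes that $X\setminus\overline{B}$ is effectively open and then directly invokes recursive compactness of $K$ to semi-decide $K\subset X\setminus\overline{B}$---whereas you unpack this into an explicit search over finite covers $I'$ satisfying the strict-inequality disjointness condition $d(s_n,s_m)>r_n+r_m$; but this is exactly what ``$X\setminus\overline{B}$ is effectively open'' means, so the arguments coincide.
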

\begin{proof} Suppose $K$ is recursively compact and let $\bar{B}\subset X$ be a closed ideal ball. Note that since $X\setminus \bar{B}$ is effectively open, by the recursive compactness of $K$ we can semi-decide whether $K\subset (X\setminus \bar{B}) \iff K\cap \bar{B} = \emptyset$, and thus recursively enumerate all ideal balls $B$ with this property. Noting that their union equals $X\setminus K$, we see that $K$ is upper computable. Now, let $U\subset X$ be an effectively open set. We show that we can semi-decide whether $U$ covers $K$. Note that $U$ covers $K$ exactly when $U\cup (X\setminus K)$ covers $X$. Thus, if $K$ is upper computable, then $U\cup (X\setminus K)$ is effectively open, and the desired result follows from  recursive compactness of $X$.   
\end{proof}

\begin{example} If $X$ is a recursively compact computable metric space, then the set $\mathcal{M}_X$ of probability measures is also a recursively compact computable metric space.  Moreover, if $T:X\to X$ is a computable map, then the set of invariant measures $\mathcal{M}_T$ is upper computable, and therefore recursively compact as well (see \cite{HoyRoj09}).   
\end{example}

\subsubsection{Computational structure of closed sets}

Previous results on the computability of attractors imply that they are in general non-computable \cite{GZB12, BY06}.  Thus, since we are interested in describing the general computational structure that attractors possess, we need notions able to capture their levels of non-computability.  We will do this by carefully identifying the information that is actually hard to compute, and measuring its computational hardness through the complexities of their integer representations in the usual \textit{arithmetical hierarchy} see~\cite{Rogers} for standard definitions. Let us briefly recall the definition here. A set $A\subset\N$ is said to be \textit{$\Pi_n$-computable} if there exists a computable map $f:\N^{n+1}\to\{0,1\}$ which satisfies
\[i\in A\Longleftrightarrow \underset{\text{$n$ alternating quantifiers}}
{\underbrace{\forall i_1,\exists i_2,\forall i_3,\dots}}
f\left(i,i_1,\dots,i_k\right)=1,\]
and  \textit{$\Sigma_n$-computable} if there exists a computable map $f:\N^{n+1}\to\{0,1\}$ which satisfies
\[i\in A\Longleftrightarrow \underset{\text{$n$ alternating quantifiers}}
{\underbrace{\exists i_1,\forall i_2,\exists i_3,\dots}}
f\left(i,i_1,\dots,i_k\right)=1.\]

The definition of computability for compact sets, say in the plane for simplicity, is equivalent to having an algorithm that can draw the set on a computer screen with arbitrary precision. Roughly, this means that one can zoom-in at any point in the set to produce arbitrarily high magnifications of it. In order to achieve this, one must be able to determine, at any given resolution, whether a pixel must be coloured or not. Lower computability represents the task of detecting whether a given pixel must be coloured, and upper computability the task of detecting whether it has to be left uncoloured. The following definition is intended to measure the \textit{degree of unsolvability} of either of these tasks for a given compact set.

Let $K$ be a compact subset of a computable metric space $X$. The \defin{inner (or intersecting)} and \defin{outer}  collections of ideal balls of $K$, denoted respectively by $\mathcal{B}(K)$ and $\mathcal{\overline{B}}(K)$, are defined by  
$$
\mathcal{B}_{in}(K) = \left\{ i\in\N :  B_{i}\cap K \neq \emptyset \right\} \qquad \text{ and }\qquad \mathcal{\overline{B}}_{out}(K) = \left\{ i\in\N :  \overline{B_{i}}\cap K = \emptyset \right\}.
$$ 
We remark that in general the closure of the ball $B_i$ may be different from the closed ball $\overline{B}_i$. 

 We will measure the computational cost of describing closed  subsets of $X$ by measuring the complexity of computing their inner and/or outer collections of ideal balls.  

\begin{definition}
Then $K$ is said to be $\defin{\Sigma_{n}}$ (respectively $\defin{\Pi_{n}}$) if its inner (respectively outer) collection is $\Sigma_{n}$.  If $K$ is both $\Sigma_{n}$ and $\Pi_{n}$, we simply say that it is $\Delta_{n}$. 
\end{definition}


We will say that $K$ is \defin{$\Sigma_{n}$-complete} if it is $\Sigma_{n}$ and any other $K'$ can be \defin{reduced} to $K$ in the sense that, provided with an enumeration of the inner collection of $K$, we can enumerate the inner collection of  $K'$. A  $\Pi_{n}$-\defin{complete} closed set is defined in a similar way. 

The following proposition shows that the hierarchy of closed sets one obtains with the previous definition is consistent with the arithmetic hierarchy.  

\begin{proposition}
If $K$ is $\Sigma_{n}$ (respectively $\Pi_{n}$), then it is $\Pi_{n+1}$ (respectively $\Sigma_{n+1}$).
\end{proposition}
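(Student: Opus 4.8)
I want to show that if $K$ is $\Sigma_n$ — meaning its inner collection $\mathcal{B}_{in}(K)$ is a $\Sigma_n$ subset of $\N$ — then $K$ is $\Pi_{n+1}$, i.e. its outer collection $\overline{\mathcal{B}}_{out}(K)$ is $\Pi_{n+1}$. (The parenthetical statement is symmetric and will follow by the dual argument, swapping the roles of inner and outer collections.) The key idea is to express the relation ``$\overline{B_i}\cap K=\emptyset$'' in terms of the relation ``$B_j\cap K\neq\emptyset$'' by covering the complement of a closed ball from the inside using a countable union of open ideal balls, and then counting quantifiers.

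First I would observe the purely metric fact that for an ideal ball $\overline{B_i} = \overline{B}(s^{(i)}, r^{(i)})$ there is a computable sequence of ideal balls $(B_{j})_{j\in J_i}$, with $J_i$ recursively enumerable uniformly in $i$, such that $X\setminus \overline{B_i} = \bigcup_{j\in J_i} B_j$: concretely, take all ideal balls $B(s^{(j)},r^{(j)})$ whose closure is disjoint from $\overline{B_i}$, which can be checked because $d(s^{(j)},s^{(i)}) > r^{(i)} + r^{(j)}$ is a $\Sigma_1$ condition in the uniformly computable distances between ideal points, and these exhaust $X\setminus\overline{B_i}$ by density of $\S$. Then $\overline{B_i}\cap K=\emptyset$ is equivalent to $K\subseteq \bigcup_{j\in J_i}B_j$, and since $K$ is compact this holds iff there is a \emph{finite} subset $F\subseteq J_i$ with $K\subseteq\bigcup_{j\in F}B_j$, which in turn holds iff for every finite subset $F$ of $J_i$ that fails to cover $K$... — but this last reformulation is circular, so instead I would use the contrapositive at the level of points: $K\subseteq\bigcup_{j\in J_i}B_j$ iff \emph{every} point of $K$ lies in some $B_j$, and the negation of that, ``some point of $K$ avoids all $B_j$'', needs to be expressed without referring to points of $K$.

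The clean way around this is: $\overline{B_i}\cap K\neq\emptyset$ iff for every finite $F\subset J_i$, the ball $\overline{B_i}$ is \emph{not} covered by $\{B_j : j\in F\}$ — no, again the complement issue. The robust route is the one the paper itself uses in Proposition~\ref{prop.equivalence.comp.closed.subset}: work with the inner collection directly. Note $\overline{B_i}\cap K=\emptyset$ iff there exists a rational $\rho>r^{(i)}$ with $\overline{B}(s^{(i)},\rho)\cap K=\emptyset$, and for a fixed such $\rho$, $\overline{B}(s^{(i)},\rho)\cap K=\emptyset$ iff $B(s^{(i)},\rho)$ contains no point of $K$, i.e. iff the index of $B(s^{(i)},\rho)$ does \emph{not} belong to $\mathcal{B}_{in}(K)$ — wait, that needs $B(s^{(i)},\rho)\cap K=\emptyset$, not the closed ball, but one can sandwich: $\overline{B}(s^{(i)},r^{(i)})\cap K=\emptyset \iff \exists\, \rho\in\Q,\ \rho>r^{(i)},\ B(s^{(i)},\rho)\cap K=\emptyset \iff \exists \rho\in\Q,\rho>r^{(i)},\ i_\rho\notin\mathcal{B}_{in}(K)$, where $i_\rho$ is the (computable-from-$\rho$-and-$i$) index of the ideal ball $B(s^{(i)},\rho)$. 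Now $\mathcal{B}_{in}(K)$ being $\Sigma_n$ means $i_\rho\notin\mathcal{B}_{in}(K)$ is a $\Pi_n$ condition, uniformly; prefixing the existential quantifier over $\rho$ gives a $\Sigma_1$-over-$\Pi_n$ condition, which is $\Sigma_{n+1}$ if $n\geq 1$, but I want $\Pi_{n+1}$. Here I need to be careful: $\exists\rho\,(\Pi_n)$ collapses into $\Sigma_{n+1}$, not $\Pi_{n+1}$; however $\Sigma_{n+1}\subseteq\Pi_{n+2}$ is too weak. So the correct statement must pass through the \emph{compact} sandwich: the density/compactness argument shows $\overline{B_i}\cap K=\emptyset$ iff $B(s^{(i)},\rho)\cap K=\emptyset$ for \emph{some} rational $\rho>r^{(i)}$, and this is exactly one existential quantifier over a $\Pi_n$ predicate, landing in $\Sigma_{n+1}$; dually the inner collection being $\Pi_n$ gives the outer collection in... — let me restate the target: the proposition claims $\Sigma_n\Rightarrow\Pi_{n+1}$ means inner-$\Sigma_n$ implies outer-$\Sigma_{n+1}$? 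No — ``$K$ is $\Pi_{n+1}$'' means the \emph{outer} collection is $\Sigma_{n+1}$. Yes! By definition $K$ is $\Pi_m$ iff its outer collection is $\Sigma_m$. So the target is precisely: outer collection is $\Sigma_{n+1}$, which is exactly what the existential-quantifier-over-$\Pi_n$ computation delivers.

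So the proof structure is: (1) uniformly enumerate, for each $i$, the rationals $\rho>r^{(i)}$ and the index $i_\rho$ of $B(s^{(i)},\rho)$; (2) invoke compactness of $K$ together with density of $\S$ to prove the equivalence $\overline{B_i}\cap K=\emptyset \iff \exists\rho\in\Q\,(\rho>r^{(i)} \wedge i_\rho\notin\mathcal{B}_{in}(K))$ — the forward direction is trivial by taking $\rho$ slightly bigger than $r^{(i)}$ while staying disjoint, using that $\dist(\overline{B_i},K)>0$ when they are disjoint compact/closed sets in... actually $K$ is compact so $\dist(\overline{B}(s^{(i)},r^{(i)}),K)>0$, and then any $\rho<r^{(i)}+\dist$ works; the reverse direction is immediate since $\overline{B_i}\subseteq B(s^{(i)},\rho)$; (3) since $\mathcal{B}_{in}(K)$ is $\Sigma_n$, the predicate ``$i_\rho\notin\mathcal{B}_{in}(K)$'' is $\Pi_n$ uniformly in $(i,\rho)$, so the whole right-hand side is $\Sigma_1$ applied to $\Pi_n$, hence $\Sigma_{n+1}$; thus the outer collection of $K$ is $\Sigma_{n+1}$, i.e. $K$ is $\Pi_{n+1}$. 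The dual claim ($\Pi_n\Rightarrow\Sigma_{n+1}$) is obtained by the mirror-image argument: if the outer collection is $\Sigma_n$, then ``$B_i\cap K\neq\emptyset$'' can be written as $\forall\rho\in\Q\,(\rho<r^{(i)}\Rightarrow i'_\rho\in\mathcal{B}_{in}(K))$ where $i'_\rho$ indexes the ball $B(s^{(i)},\rho)$... hmm, I should double-check this mirror argument goes through with a universal quantifier producing $\Pi_{n+1}$, using that $B_i\cap K\neq\emptyset$ iff $\overline{B}(s^{(i)},\rho)\cap K\neq\emptyset$ for all $\rho<r^{(i)}$ — that equivalence needs $K$ closed and is true because if $B_i\cap K\ni x$ then $d(s^{(i)},x)<r^{(i)}$ so $x\in\overline{B}(s^{(i)},\rho)$ for $\rho\in(d(s^{(i)},x),r^{(i)})$, giving the nontrivial direction — and expressing ``$\overline{B}(s^{(i)},\rho)\cap K\neq\emptyset$'' via the complement of the outer collection.

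\textbf{Main obstacle.} The delicate point is matching quantifier types to the definitions: one must keep straight that ``$K$ is $\Pi_m$'' refers to the \emph{outer} collection being $\Sigma_m$ (not $\Pi_m$), and that passing from a closed ball to the open balls of slightly larger radius costs exactly one existential quantifier — this is where compactness of $K$ and density of the ideal points $\S$ are both essential, and it is the only place in the argument that is not a mechanical quantifier count. I would present the equivalence $\overline{B_i}\cap K=\emptyset \iff \exists \rho\in\Q\, (\rho > r^{(i)} \wedge \overline{B}(s^{(i)},\rho)\cap K=\emptyset) \iff \exists\rho\in\Q\,(\rho>r^{(i)}\wedge B(s^{(i)},\rho)\cap K=\emptyset)$ as a separate displayed lemma-within-the-proof, justify it using $\dist(\overline{B_i},K)>0$, and then the rest is bookkeeping in the arithmetical hierarchy.
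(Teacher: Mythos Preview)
The paper states this proposition without proof, so there is nothing to compare against; I evaluate your attempt on its own.

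Your primary direction ($\Sigma_n\Rightarrow\Pi_{n+1}$) is correct once the definitions are sorted out. The equivalence
\[
\overline{B}(s^{(i)},r^{(i)})\cap K=\emptyset \ \Longleftrightarrow\ \exists\,\rho\in\Q,\ \rho>r^{(i)},\ B(s^{(i)},\rho)\cap K=\emptyset
\]
holds by compactness of $K$ (the continuous function $y\mapsto d(y,s^{(i)})$ attains a minimum $>r^{(i)}$ on $K$), and one existential over a $\Pi_n$ predicate places the outer collection in $\Sigma_{n+1}$, which is exactly what ``$K$ is $\Pi_{n+1}$'' means in the paper's convention. (A minor caveat: the paper's remark distinguishes the closure of $B_i$ from the closed ball $\overline{B}_i$. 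If the outer collection is meant to use the \emph{closure}, your forward implication can fail in general metric spaces; the repair is to replace ``ball of radius $\rho>r^{(i)}$'' by ``$1/m$-neighbourhood of $B_i$'', still one existential over a $\Pi_n$ predicate.)

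Your dual direction, however, is wrong as written. The equivalence you propose,
\[
B_i\cap K\neq\emptyset \ \Longleftrightarrow\ \forall\,\rho<r^{(i)},\ \overline{B}(s^{(i)},\rho)\cap K\neq\emptyset,
\]
is false: take $K=\{1/2\}\subset[0,1]$, $B_i=B(0,1)$, $\rho=1/4$. Your own justification (``$x\in\overline{B}(s^{(i)},\rho)$ for $\rho\in(d(s^{(i)},x),r^{(i)})$'') already shows the quantifier must be existential. You also invoke $\mathcal{B}_{in}(K)$ where the hypothesis concerns $\overline{\mathcal{B}}_{out}(K)$, and you aim for $\Pi_{n+1}$ when the target is $\Sigma_{n+1}$. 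The correct mirror is
\[
B_i\cap K\neq\emptyset \ \Longleftrightarrow\ \exists\,\rho\in\Q,\ \rho<r^{(i)},\ i_\rho\notin\overline{\mathcal{B}}_{out}(K),
\]
which is an existential over the complement of a $\Sigma_n$ set, hence $\Sigma_{n+1}$; this gives that the inner collection is $\Sigma_{n+1}$, i.e.\ $K$ is $\Sigma_{n+1}$, as required.
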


\section{Computability obstructions in attractors realization: complexity upper bounds}\label{section.UpperBounds}

In this section we provide general obstructions that bound the maximal complexity attractors may exhibit according to their types. 

%

\begin{theorem}\label{upper_bounds} Let $X$ be a recursively compact computable metric space, $T:X\longrightarrow X$ be a computable map, $\Leb$ a computable reference measure and $A$ be a closed subset of $X$ which is invariant by $T$. 
\begin{itemize}
\item[(i)] If $A$ is a topological attractor, then it is a $\Pi_{2}$ set. 
\item[(ii)] If $A$ is a metric attractor, then it is a $\Pi_{2}$ set. 
\item[(iii)]  If $A$ is a statistical attractor, then it is $\Sigma_{2}$. 
\item[(iv)] If $A$ is strongly attracting, then it is $\Pi_{1}$.
\end{itemize}

Now, assume that $A$ is $\Pi_{n}$ with $n=1, 2$.  

\begin{itemize}
\item[(v)] If $A$ is minimal, then it is also $\Sigma_{n}$.
\item[(vi)]  Suppose in addition that $T$ is effectively open. If the action of $T$ on $A$ is exact, then $A$ is also $\Sigma_{n}$. 
\end{itemize}

\end{theorem}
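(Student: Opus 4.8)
First, let me set up notation and understand what needs to be shown. We have a recursively compact computable metric space $X$, a computable map $T:X\to X$, a computable reference measure $\Leb$, and a closed $T$-invariant set $A$. There are six claims. The strategy for each will be to express the "inner collection" $\mathcal{B}_{in}(A)$ or "outer collection" $\overline{\mathcal{B}}_{out}(A)$ via an arithmetical predicate with the right quantifier complexity, using the characterizations of the various attractor notions together with the computability assumptions.

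For (i)–(ii): for a topological or metric attractor $A$, the key idea is that $A$ is the smallest closed set whose realm of attraction is "large" (generic or full-measure). So a ball $B_i$ intersects $A$ iff removing a neighbourhood of $B_i$ from $A$ destroys largeness of the realm — but since $A$ itself is not known a priori, one instead builds $A$ from below as the closure of a limit set. The cleanest route: show $A = \overline{\bigcup_n \omega\text{-limits of a suitable countable dense set of "typical" points}}$, or rather that the complement $X\setminus A$ is characterized by: $\bar B_i \cap A = \emptyset$ iff for "large"-many $x$, the orbit of $x$ eventually avoids $\bar B_i$. Eventual avoidance of a closed ideal ball by an orbit is a $\Sigma_1$ event in $x$ (it is an open condition checked finitely — actually "eventually stays out of $\bar B_j$" = $\bigcup_N \bigcap_{n\ge N}\{T^n x\notin \bar B_j\}$, which is $\Sigma_2$ in the point, but the complement "infinitely often in $B_j$" is $\Pi_2$... one must be careful). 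The point of $\Pi_2$ is that "largeness" (full measure or genericity) of a $\Sigma_2$-definable set of points is a $\Pi_2$ condition: full measure of a set given as a countable union of effectively closed sets amounts to $\forall \epsilon\,\exists N$ (measure exceeds $1-\epsilon$), using Proposition~\ref{prop:measures}; genericity is handled analogously via a Banach–Mazur/effective Baire category argument counting which ideal balls are forced into the realm. So $\bar B_i\cap A=\emptyset \iff$ [the realm of $A\setminus$(nbhd of $B_i$) is still large] and this outer predicate comes out $\Sigma_2$, giving $A\in\Pi_2$. I expect this — correctly organizing the quantifiers so that minimality of $A$ plus largeness collapses to exactly two alternations — to be the main obstacle, and the topological (category) case to require the most care since there is no analogue of Proposition~\ref{prop:measures} and one must argue directly about which cylinders/ideal balls are "forced".

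For (iii): if $A$ is a statistical attractor with physical measure $\mu$, then $\text{supp}(\mu)=A$, so $B_i\cap A\neq\emptyset \iff \mu(B_i)>0$. The plan is to show $\mu$ is $\Delta_2$ (hence $\mu(B_i)>0$ is $\Sigma_2$): $\mu=\nu_x$ for $\Leb$-a.e.\ $x$, $\mu$ is the unique invariant measure with this property, and $\mathcal{M}_T$ is upper computable (recursively compact). One recovers $\mu$ as the a.e.\ limit of $\nu_n(x)$; "$\mu(B_i)>0$" unfolds as: there is a rational $q>0$ and a "large" set of $x$ whose empirical averages $\nu_n(x)(B_i)$ exceed $q$ infinitely often — a $\Sigma_2$ statement once the a.e.\ and limsup quantifiers are arranged, the full-measure quantifier again being absorbed via Proposition~\ref{prop:measures}. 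For (iv): strong attraction gives $A=\bigcap_n T^n(U)$ with $T(\overline U)\subset U$ open; then $\overline{B}_i\cap A=\emptyset \iff \overline{B}_i\cap T^n(\overline U)=\emptyset$ for some $n$, and since $T^n(\overline U)$ is recursively compact (continuous image of a recursively compact set under a computable map) this is a $\Sigma_1$ condition, so $A\in\Pi_1$.

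For (v) and (vi): here $A$ is already known to be $\Pi_n$, so by Proposition~\ref{prop.equivalence.comp.closed.subset} (applied within $A$, which is recursively compact) we may semi-decide inclusions of $A$ in finite unions of ideal balls. If $A$ is minimal, every orbit is dense in $A$; so $B_i\cap A\neq\emptyset \iff$ there is a finite family of ideal balls covering $A$ such that each of them has a forward iterate meeting $B_i$ — more simply, $B_i\cap A\neq\emptyset \iff \bigcup_{n\ge 0} T^{-n}(B_i)\supseteq A$, and the right side is semi-decidable: $T^{-n}(B_i)$ is effectively open (by computability of $T$), the growing union $\bigcup_{n\le N}T^{-n}(B_i)$ is effectively open uniformly, and "this open set covers the recursively-compact $A$" is semi-decidable; the outer $\Sigma_1$ wrapper combined with the given $\Pi_n$ bound yields $A\in\Sigma_n$ when $n=1$, and for $n=2$ one interleaves the $\Pi_2$-approximation of $A$ with this search. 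If the action on $A$ is exact with $T$ effectively open, then every ball $B$ meeting $A$ has some $T^m B\supseteq A$; so $B_i\cap A\neq\emptyset \iff \exists m\, (T^m B_i \supseteq A)$, and $T^m B_i$ is effectively open (compose effective openness of $T$ with itself), so "$T^m B_i\supseteq A$" is semi-decidable, giving the same $\Sigma_n$ conclusion. The delicate point in (v)–(vi) is handling $n=2$: one does not have $A$ on hand as a recursively compact set, only $\Pi_2$-approximations, so the semi-decision procedure for coverage must be run against a double-limit description of $A$ and the resulting complexity must be shown not to exceed $\Sigma_2$ — I expect this bookkeeping to be the main technical obstacle in the last two items.
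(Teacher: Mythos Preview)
Your overall strategy matches the paper's, and for (i), (ii), (v), (vi) you have essentially the same characterizations. A few points where your outline diverges or leaves a gap:

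\textbf{(iv).} You assert that $T^n(\overline U)$ is recursively compact because $\overline U$ is. But $U$ is merely \emph{some} open neighbourhood of $A$ given by the definition of ``strongly attracting''; nothing says $U$ is effectively open or that $\overline U$ is recursively compact. The paper fixes this by first replacing $U$ with a finite union $\mathcal B=\bigcup_{i=1}^k \overline{B_{n_i}}$ of closed ideal balls with $A\subset\mathcal B\subset U$ (possible by compactness of $A$). Then $\mathcal B$ is computably closed, hence recursively compact, and $A=\bigcap_n T^n(\mathcal B)$ is recursively compact, so $\Pi_1$ by Proposition~\ref{prop.equivalence.comp.closed.subset}.

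\textbf{(iii).} Your route through ``a.e.\ $x$, $\limsup_n \nu_n(x)(B_i)>q$'' forces you to manage an almost-everywhere quantifier on top of the $\limsup$, and $1_{B_i}(T^n x)$ is only lower-semicomputable in $x$, so the bookkeeping is delicate. The paper avoids this entirely: since $\nu_x=\mu$ for $\lambda$-a.e.\ $x$, integrating against any continuous $\varphi$ and using dominated convergence gives
\[
\int\varphi\,d\mu \;=\; \int\Big(\lim_n \tfrac1n\sum_{t<n}\varphi(T^t x)\Big)d\lambda(x)\;=\;\lim_n \int\varphi\,d\lambda_n,
\qquad \lambda_n=\tfrac1n\sum_{t<n}T^t_*\lambda.
\]
So $\mu$ is the weak* limit of the uniformly computable sequence $(\lambda_n)$, and ``$\mu(B_i)>0$'' becomes a clean $\exists j,k,m\,\forall n\ge m$ condition on the computable numbers $\lambda_n(\overline{B_k})$. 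No a.e.\ quantifier survives.

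\textbf{(v)--(vi).} Your equivalence $B_i\cap A\neq\emptyset \iff A\subset\bigcup_t T^{-t}(B_i)$ (resp.\ $\bigcup_t T^t(B_i)$) is exactly the paper's. Your worry about $n=2$ (that $A$ is not recursively compact, only $\Pi_2$) is resolved by a simple trick: do not test coverage of $A$, test coverage of $X$. Namely, $A\subset V$ (for $V=\bigcup_t T^{\mp t}(B_i)$ effectively open) holds iff there is a finite set $\{B_{j_1},\dots,B_{j_r}\}$ of ideal balls with $\overline{B_{j_\ell}}\cap A=\emptyset$ for all $\ell$ and $V\cup\bigcup_\ell B_{j_\ell}=X$. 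The last condition is $\Sigma_1$ by recursive compactness of $X$; each ``$\overline{B_{j_\ell}}\cap A=\emptyset$'' is $\Sigma_n$ by the $\Pi_n$ hypothesis on $A$; the outer $\exists$ over finite sets keeps the total complexity $\Sigma_n$. No interleaving of approximations is needed.

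\textbf{(i)--(ii).} Your informal equivalence (``$\overline{B_i}\cap A=\emptyset$ iff for large-many $x$ the orbit eventually avoids $\overline{B_i}$'') is precisely what the paper proves, phrased via the set $\tilde\rho(\overline{B_i})=\{x:\omega(x)\cap\overline{B_i}\neq\emptyset\}$: one shows $\overline{B_i}\cap A\neq\emptyset\iff \tilde\rho(\overline{B_i})$ is dense (topological case) or has positive $\lambda$-measure (metric case), using both clauses of Definition~\ref{attractor_top}. Writing $\tilde\rho(\overline{B_i})=\bigcap_k\bigcap_N\bigcup_{n\ge N}T^{-n}B(\overline{B_i},1/k)$ then gives the $\forall\exists$ form directly; in the metric case Proposition~\ref{prop:measures} handles the measure comparison, and in the topological case ``dense'' is just $\forall j\,(\cdots\cap B_j\neq\emptyset)$, which is $\Sigma_1$ inside, so no separate Banach--Mazur argument is required.
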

\proof

 For a given ideal ball $B_{i}$, denote by $\tilde{\rho}(\overline{B_{i}})$ the set of points $x$ such that $\omega(x)$ intersects $\overline{B_{i}}$.  That is, 
$$
\tilde{\rho}(\overline{B_{i}}) =\left\{x\in X: \omega(x)\bigcap\overline{B_{i}}\ne\emptyset\right\}=\bigcap_k\bigcap_N\bigcup_{n\geq N} T^{-n}B\left(\overline{B_{i}},\frac{1}{k}\right)$$
where $B(K,\epsilon)=\{x\in X:d(x,K)<\epsilon\}$ is an open upper approximation of $K$ at precision $\epsilon$. Note that $\tilde{\rho}(\overline{B_{i}})$ is a $G_{\delta}$ set (not necessarily dense or of positive measure).  We remark that $\rho(A)\subset\tilde{\rho}(A)$.

\medskip
\noindent \emph{Proof of $(i)$}: Let $A$ be the topological attractor of the map $T$. We first show that the closure of an ideal ball $\overline{B_i}$ does not intersect $A$ if and only if $\tilde{\rho}(\overline{B_i})$ is not dense. 

If $\overline{B_i}\cap A=\emptyset$, the orbits started at points in $\rho(A)$ must accumulate in $A$ and so can visit $B(\overline{B_i},\epsilon)$ only finitely many times for sufficiently small $\epsilon$. In other words $\tilde{\rho}(\overline{B_i})\cap\rho(A)=\emptyset$.  By definition, the realm of attraction of $A$, $\rho(A)$, contains a dense $G_{\delta}$ set, thus $\tilde{\rho}(\overline{B_i})$ cannot be dense by Baire's category theorem.

Conversely, if $\tilde{\rho}(\overline{B_i})$ is not dense, there exists a ball $B$ which does not intersect $\tilde{\rho}(\overline{B_i})$. But $B\cap\rho(A)$ is not meager so by minimality of $A$, which is the second requirement of Definition \ref{attractor_top}, one has $\omega(B\cap\rho(A))=A$. It follows that $\overline{B_i}\cap A=\emptyset$ since an element of $B$ visits $B(\overline{B_i},\epsilon)$ only finitely many times for  sufficiently small $\epsilon$. 

Now, for a given ideal ball $B_i$, one has
\begin{eqnarray*}
 \overline{B}_i\cap A\ne\emptyset&\Longleftrightarrow&  \tilde{\rho}(\overline{B_i})\textrm{ dense }\\
&\Longleftrightarrow& \forall j\in\N,\quad  \tilde{\rho}(\overline{B_i})\cap B_j\ne\emptyset\\
&\Longleftrightarrow& \forall j,k, n\in\N,\exists m\in\N, \quad   \bigcup_{t=n}^m T^{-t}B\left(\overline{B_{i}},\frac{1}{k}\right)\cap B_j\ne\emptyset
\end{eqnarray*}

It is clear that $B\left(\overline{B_{i}},\frac{1}{k}\right)$ is a lower computable open set. Moreover, since $T$ is computable, we can enumerate,  uniformly in $i,k,n$ and $m$, ideal balls whose union is $\bigcup_{t=n}^m T^{-t}B\left(\overline{B_{i}},\frac{1}{k}\right)$. As  the intersection of lower computable open sets is again lower computable, we deduce that it is uniformly semi-decidable whether the instersection is non empty. It follows that $\{i\in\N:\overline{B_i}\cap A\ne\emptyset\}$ is a $\Pi_2$-computable set.

%
\medskip
\noindent \emph{Proof of $(ii)$}:  Let $A$ be the metric attractor of the map $T$. We first show that an ideal ball $\overline{B_i}$ does not intersect $A$ if and only if $\tilde{\rho}(\overline{B_i})$ has null measure.

If $\overline{B_i}\cap A=\emptyset$, one has $\tilde{\rho}(\overline{B_i})\cap\rho(A)=\emptyset$.  By definition, the realm of attraction of $A$ has measure one so $\lambda(\tilde{\rho}(\overline{B_i}))=0$.

Conversely, assume that $\overline{B_i}\cap A\ne\emptyset$. Since $A$ is the only metric attractor, by the second requirement of Definition~\ref{attractor_top}, we deduce that for all ideal balls $B_j$ such that $A\cap B_j^c\ne\emptyset$, where $B_j^c=X\setminus B_j$, one has $\lambda(\rho(A\cap B_j^c))=0$. Thus, 
$$
\lambda\left(\bigcup_{\{j \, : \,  B_j\cap A \neq \emptyset\}} \rho(A\cap B_j^c)\right) = 0. 
$$
But this is exactly the set of points $x\in X$ such that $\omega(x)\neq A$.  It follows that $\omega(x)=A$ for almost every $x$. Since $\overline{B_i}\cap A\ne\emptyset$, one deduces that the trajectory of $x$ visits $B\left(B_i,\frac{1}{k}\right)$ infinitely many times for arbitrary $k$ so $\lambda(\tilde{\rho}(B_i))>0$.

Now, for $B_i$ an ideal ball, one has
 \begin{eqnarray*}
 \overline{B_i}\cap A\ne\emptyset&\Longleftrightarrow&  \overline{B_i}\cap A\ne\emptyset \\
&\Longleftrightarrow&  \lambda(\tilde{\rho}(\overline{B_i}))>0\\
&\Longleftrightarrow& \forall j,k, n\in\N, \exists m\in\N, \quad   \lambda\left(\bigcup_{t=n}^mT^{-t}\left(B\left(\overline{B_i},\frac{1}{k}\right)\right)\cap B_j\right)>0.
\end{eqnarray*}

Since $T$ is computable, we can uniformly enumerate ideal balls whose union is $\bigcup_{t=n}^mT^{-t}\left(B\left(\overline{B_i},\frac{1}{k}\right)\right)\cap B_j$. Thus to semi-decide if this set has positive measure, it is sufficient to find one of these balls with positive measure and this is semi-decidable by Proposition \ref{prop:measures}. It follows that $\{i\in\N:\overline{B_i}\cap A\ne\emptyset\}$ is a $\Pi_2$-computable set.

\medskip
\noindent \emph{Proof of $(iii)$}: Let $A$ be the statistical attractor of the map $T$ and $\mu$ be the invariant measure supported on $A$, that satisfies  $\mu = \nu_{x}$ for almost every $x$ w.r.t $\lambda$. Given a continuous function $\varphi:X\to\R$, one has

\begin{eqnarray*}
\int \varphi(x)d\mu(x)&=&\int\left(\int \varphi(y)d\nu_x(y)\right)d\lambda(x)\\
&=&\int\lim_{n\to\infty}\frac{1}{n}\sum_{t=0}^{n-1}\varphi(T^t(x))d\lambda(x)\\
&=&\lim_{n\to\infty}\int \varphi(x)d\lambda_n(x)
\end{eqnarray*}
where $\lambda_n=\frac{1}{n}\sum_{t=0}^{n-1}T^t_{*}\lambda$. Since $T$ is computable, $\mu$ is the weak limit as $n\to\infty$ of the uniformly computable sequence $(\lambda_n)_{n\in\N}$. By Proposition \ref{prop:measures} and the fact that the complement of a closed ideal ball is effectively open, it follows that there exists an algorithm which takes as input $n,k$ and $l$ and returns a rational number $q_{n,k,l}$ whose infimum over $l$ equals $\lambda_n(\overline{B_k})$. Now, for $B_i$ an ideal ball, one has
 \begin{eqnarray*}
 B_i\cap A\ne\emptyset&\Longleftrightarrow& \mu(B_i)>0\\
 &\Longleftrightarrow& \exists j,k,m\in\N, \forall n\geq m,\quad \overline{B_k}\subset B_j\textrm{ and }\lambda_n(\overline{B_k})\geq 2^{-k}\\
  &\Longleftrightarrow& \exists j,k,m\in\N, \forall n\geq m,\forall l\in\N,\quad \overline{B_k}\subset B_j\textrm{ and }q_{n,k,l}\geq 2^{-k}.
 \end{eqnarray*}

 We deduce that $\{i\in\N:B_i\cap A\ne\emptyset\}$ is a $\Sigma_2$-computable set. 

\medskip

\noindent \emph{Proof of $(iv)$} Assume $A$ is strongly attracting with neighborhood $U$. Since $A$ is compact, there must exists a finite cover of $A$ by ideal balls $B_{n_{1}},\dots,  B_{n_{k}}$ such that 
$$
\mathcal{B}=\bigcup_{i=1}^{k}\overline{B_{n_{i}}} \subset U. 
$$
Since $\mathcal{B}$ is a computable closed set, it is in particular recursively compact. We see then that $A=\bigcap_{n}T^{n}\mathcal{B}$ is also recursively compact, which by Proposition \ref{prop.equivalence.comp.closed.subset} is therefore upper semi-computable, as it was to be shown.

\medskip
\noindent \emph{Proof of $(v)$} Assume $A\subset X$ is a closed invariant set on which $T$ is minimal. Let $\tau$ be a computable surjection from $\N$ to the finite subsets     of $\N$. Consider an ideal ball $B_i$, from the minimality property it follows that 
 \begin{eqnarray*}
 B_i\cap A \neq \emptyset &\Longleftrightarrow&  A \subset \bigcup_{t\geq 0 } T^{-t}(B_i)\\
 &\Longleftrightarrow& \exists k\in\N, \forall j\in\tau(k),\quad \overline{B_j}\cap A=\emptyset \\
 &&\textrm{ and }\left(\bigcup_{j\in\tau(k)}B_j\right)\cup\left(\bigcup_{t\geq 0 } T^{-t}(B_i)\right)=X.
\end{eqnarray*}
 Assume that $A$ is $\Pi_n$-computable with $n=1$ or $2.$ We deduce that it is $\Sigma_n$-computable to decide  that $\overline{B_j}\cap A=\emptyset $. Moreover, since $T$ is computable, it is possible to enumerate ideal balls whose union is $\bigcup_{t\geq 0 } T^{-t}(B_i)$. Using the fact that $X$ is recursively compact, it is uniformly semi-decidable to know if 
\[\left(\bigcup_{j\in\tau(k)}B_j\right)\cup\left(\bigcup_{t\geq 0 } T^{-t}(B_i)\right)=X.\]
 
 We deduce that $\{i\in\N:B_i\cap A\ne\emptyset\}$ is a $\Sigma_n$-computable set.

\medskip
\noindent \emph{Proof of $(vi)$} Finally, assume $T$ is open and exact on $A$. It follows that for an ideal ball $B_i$,  
$$
B_i\cap A \neq \emptyset \iff A \subset \bigcup_{t}T^{t} (B_i). 
$$ 
Since $T$ is effectivelly open, by hypothesis, the set $\bigcup_{t}T^{t}(B_i)$ is recursively open uniformly in $i$. The claim now follows exactly as in the previous case.

\endproof

\section{Realizing complex attractors within symbolic systems}\label{section.symbolic}

In this section we present  examples of computable systems on symbolic spaces exhibiting attractors whose algorithmic complexity match the general upper bounds given by Theorem \ref{upper_bounds}, showing that these bounds are tight in general.

\subsection{Symbolic Systems}\label{symbolic_notation}

Given a finite alphabet $\mathcal{A}$, a \defin{word} or \defin{pattern} $x_1x_2\dots x_n$ is a finite sequence (possibly empty) of symbols from $\A$. We denote by $\A^{\ast}$ the set of words and by $\A^{+}$ the set of non empty words. The \defin{length} of a word $u\in\A^{\ast}$ will be denoted by $|u|$.  A symbolic space is a set of the form $\mathcal{A}^\N$, whose elements will be referred to as \defin{configurations}. Given $i\leq j$, the restriction of configuration $x\in\mathcal{A}^\N$ to $[i,j]$ is defined to be the word $x_i\dots x_j$, and denoted $x_{[i,j]}$. We say that a word $u\in\A^{\ast}$ \defin{appears} at position $i\in\N$ in a configuration $x\in\mathcal{A}^\N$ if $x_{[i,i+|u|-1]}=u$. Given $A\subset\mathcal{A}^\N$, the \defin{language} of $A$ is the set of words that appear in some configuration of $A$. The \defin{cylinder} centered on the word $u\in\A^{\ast}$ at the position $i\in\N$ is the set of configurations defined by $$[u]_i=\{x\in\mathcal{A}^\N: u \text{ appears at position } i \text{ in }x \}.$$

It is well known that $\mathcal{A}^\N$ is compact for the product topology (where $\mathcal{A}$ is considered with the discrete topology) and that the cylinders form a basis of clopen sets. This is a computable space: the ideal points can be taken to be the set of all eventually constant configurations, and the distance is given by
$$d(x,y)=2^{-\min\{n\in\N:x_n\ne y_n\}}\textrm{ for all }x,y\in\A^\N.$$ 
  
According to the definition of computable maps, $T:\mathcal{A}^\N\longrightarrow\mathcal{A}^\N$ is computable iff there exist two computable functions $\alpha:\N\longrightarrow\N$ and $\beta:\mathcal{A}^{\ast}\longrightarrow\mathcal{A}$ such that $$T(x)_n=\beta(x_{[0,\alpha(n)]})\textrm{ for all $x\in\mathcal{A}^\N$ and $n\in\N$}.$$

A simple example of a computable map on $\mathcal{A}^\N$ is the \defin{shift} map defined by
$$\begin{array}{ccccc}
\sigma:&\A^{\N} & \longrightarrow &\A^{\N}&\\
&x&\longmapsto & \sigma(x)& \textrm{ where } \sigma(x)_i=x_{i+1}\textrm{ for all }i\in\N.
\end{array}$$

A closed shift-invariant subset of $\A^\N$ is called a \defin{subshift}. Equivalently, a subshift can be defined by a list of forbidden patterns as the collection of all configurations in which no forbidden pattern appears. If a subshift $A$ is defined by forbidden patterns that can be enumerated by a Turing machine, then $A$ is $\Pi_1$. If the forbidden patterns can be enumerated with the help of the Halting problem, then $A$ is $\Pi_2$.

Given a symbol $a\in \{0,1\}$, a \defin{block} of size $n$ for $a$ is a finite word of the form $$b\underbrace{aaaa\dots a}_{n}c$$ where $b$ and $c$ are different from $a$. The position of the block in a configuration is the index of the letter $b$.

\subsection{A $\Pi_1$-complete Strong Attractor}

We present here an example of a computable system having a $\Pi_1$-complete invariant set $A$, which is the unique attractor of the system in the strongest possible sense: topological, metric and statistical. Moreover, it is strongly attracting, and the dynamics it supports is highly non trivial. Note that by Theorem \ref{upper_bounds}, such attractors must be at most $\Pi_1$.


\begin{theorem}\label{thm:Pi_1-complete}
There exists a computable map $T: \{0,1\}^\N\to \{0,1\}^\N $ and an invariant closed subset $A\subset X$ with the following properties: 
\begin{itemize}
\item[i)] $A$ is $\Pi_1$-complete.
\item[ii)] $A$ is the topological and metric attractor of $T$; Moreover,  the dynamics of $T$ on $A$ is a topologically mixing shift of positive entropy. 
\item[iii)] $A$ is strongly attracting, and its realm of attraction is in fact the whole space: $\omega(x)\subset A$ for all $x\in X$;
\item[iv)] $A$ is the statistical attractor of $X$ w.r.t any reference measure $\lambda$ which is shift-ergodic and of full support. In particular, the unique physical measure of the system is non computable.  
\end{itemize}
\end{theorem}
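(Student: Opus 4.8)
The plan is to build $T$ as a "cleaning" map that drives every configuration toward a target subshift $A$ which encodes a $\Pi_1$-complete set of forbidden words, while doing so in a way that is itself computable. Concretely, fix a Turing machine $M$ and let $S\subset\{0,1\}^*$ be the (r.e., hence $\Pi_1$ as a list of forbidden patterns) set $\{0^n : M \text{ does not halt in } \le n \text{ steps}\}$ — or any standard encoding making the resulting subshift $\Pi_1$-complete. One natural route: take $A_0$ to be a fixed SFT or sofic shift that is topologically mixing with positive entropy on a sub-alphabet, and then "puncture" it: forbid the words $u_k$ (on a disjoint set of coordinates/markers) exactly when $k$ enters a fixed r.e. set $W$. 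The forbidden list is r.e., so $A$ is $\Pi_1$; a straightforward many-one reduction from $W$ to the intersecting-ball relation (a cylinder $[u_k]_0$ meets $A$ iff $k\notin W$, roughly) gives $\Pi_1$-completeness, proving (i). One must check the punctures do not destroy mixing or entropy — arranging them to live on a sparse sequence of marker coordinates keeps a full mixing SFT factor alive, giving the second half of (ii).

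Next I would define the dynamics so that $A$ is genuinely attracting. The idea is a map $T$ that in one step both shifts and performs a bounded amount of local repair: it scans for the leftmost occurrence of a currently-detectable forbidden pattern and overwrites a symbol to kill it (using a "default" configuration of $A$ as the repair template), simultaneously applying $\sigma$ on a second coordinate track so that the restriction of $T$ to $A$ is conjugate to the mixing shift from the previous paragraph. Because forbidden patterns are only enumerated, $T$ must use a diagonal/dovetailing schedule: at time $t$ it only "knows about" the first $t$ forbidden words, so the repair acts on coordinate $n$ using only the first $\alpha(n)$ input symbols and the first (say) $n$ steps of each machine — this is exactly the finitary condition $T(x)_n=\beta(x_{[0,\alpha(n)]})$ required for computability on $\{0,1\}^\N$. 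The key design constraint: once a coordinate's neighborhood is "clean", it stays clean and, crucially, the repair on any finite window stabilizes in finitely many steps, so $\omega(x)\subset A$ for every $x$. Getting a strongly attracting neighborhood $U$ with $T(\overline U)\subset U$ and $A=\bigcap_n T^n(U)$ then follows by taking $U$ to be a neighborhood of $A$ on which one pass of repair already lands inside the next smaller neighborhood; this gives (iii) and the strong-attraction part of the theorem, and with $\rho(A)=X$ the metric and topological attractor conditions of Definition~\ref{attractor_top} are immediate (minimality-of-$A$ among closed sets with non-meager/positive-measure realm comes from the mixing, positive-entropy dynamics on $A$, which has no proper closed invariant subset with interior in $A$).

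For (iv), I would argue that for any shift-ergodic, full-support reference measure $\lambda$ on $\{0,1\}^\N$, the pushforwards $\frac1n\sum_{t<n}T^t_*\lambda$ converge weakly to a single invariant measure $\mu$ supported on $A$, and that $\nu_x=\mu$ for $\lambda$-a.e.\ $x$. Heuristically: $T$ eventually agrees with $\sigma$ on every fixed window (the repairs stop), so the empirical distributions of $T$-orbits and of $\sigma$-orbits have the same limit; shift-ergodicity of $\lambda$ pins down that limit to a fixed measure. The cleanest implementation is to make $T$ restricted to $A$ conjugate to a \emph{uniquely ergodic} mixing system of positive entropy (e.g.\ an appropriate substitution or Toeplitz-type system — positive entropy plus unique ergodicity is attainable) so that $\nu_x$ is forced to be that unique measure for \emph{every} $x$ whose $\omega$-limit lands in $A$, i.e.\ for all $x$; then condition (ii) of Definition~\ref{attractor_stat} holds trivially. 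Non-computability of the physical measure $\mu$ follows because $\mathrm{supp}(\mu)=A$ is non-computable (a computable measure has an upper-computable, hence recursively compact, support by Proposition~\ref{prop.equivalence.comp.closed.subset}), contradicting $\Pi_1$-completeness of $A$.

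The main obstacle is the simultaneous satisfaction of three competing demands: (a) $T$ must be \emph{computable} (finitary local rule with computable radius), which forces the repair schedule to be a dovetailing over time; (b) the repair must nonetheless \emph{converge} on every coordinate so that $\omega(x)\subset A$ for \emph{all} $x$ — not just typical $x$ — which requires that no coordinate is repaired infinitely often; and (c) the limiting dynamics on $A$ must retain positive entropy and mixing (and, for the clean version of (iv), unique ergodicity) despite the $\Pi_1$ punctures. Reconciling (a) and (b) is the delicate point: because forbidden words only appear in the enumeration over time, a naive rule would keep "discovering" new violations and never stabilize; the fix is to organize the marker coordinates so that the potential violation at coordinate $n$ is decided using a bounded budget depending only on $n$ (e.g.\ $n$ machine-steps), so the symbol at $n$ changes at most once. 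I expect the bulk of the technical work to be in verifying this stabilization together with the strong-attraction inclusions, the entropy/mixing preservation being a comparatively routine SFT/sofic computation.
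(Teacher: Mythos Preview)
Your overall strategy---encode a $\Pi_1$-complete set as the forbidden language of a subshift $A$, then build a computable ``cleaning'' map that drives every point into $A$ while acting as the shift on $A$---matches the paper's. But several of the mechanisms you propose differ from the paper's and some contain real gaps.

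\textbf{The cleaning mechanism.} The paper does not use a two-track ``repair on one layer, shift on another'' design, nor the invariant ``each coordinate changes at most once.'' Instead it works on a single track: the forbidden words are $01^n0$ with $n\in\mathcal{H}alt$, and $T$ erases a block $01^l0$ sitting at position $j$ only when $l\le j$ and $M_l$ halts in $\le j$ steps, then applies the ordinary shift. The point is not that each coordinate stabilizes, but that $A=\bigcap_n T^n(X)$: if $x\notin A$ had a forbidden block at position $j$, any $n$-th preimage $y$ would have that same block at position $n+j$, where for large $n$ the budget is enough to erase it---contradiction. Your ``bounded budget depending only on $n$, so coordinate $n$ changes at most once'' does \emph{not} by itself force $\omega(x)\subset A$: a forbidden word needing more than $n$ steps to certify will never be repaired at coordinate $n$. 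The shift is what makes the budget effectively unbounded along preimages, and this is the crux you are missing.

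\textbf{The ``no smaller attractor'' step.} You argue minimality of $A$ among closed sets with non-meager/positive-measure realm from ``mixing, positive entropy on $A$ has no proper closed invariant subset with interior in $A$.'' That is not enough: a proper closed invariant subset with empty interior in $A$ could still attract a set of positive measure in $X$. The paper instead exhibits a full-measure dense $G_\delta$ set $\Lambda$ (configurations in which every $A$-legal word occurs infinitely often) and shows $\omega(x)=A$ for every $x\in\Lambda$, which directly rules out any smaller attractor.

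\textbf{The statistical attractor.} You propose to make $(A,T|_A)$ uniquely ergodic so that $\nu_x$ is forced for every $x$. The paper does not do this and does not need it: its $A$ is a mixing positive-entropy subshift with many ergodic measures (e.g.\ $\delta_{0^\infty}$). Instead, for each shift-ergodic full-support reference measure $\mu$, the paper defines a measurable ``ideal cleaning'' map $\phi$ (erase every block $01^l0$ with $M_l$ halting), shows $\sigma^n\phi_*\mu\to\tilde\mu$ with $\mathrm{supp}(\tilde\mu)=A$, and proves that for $\mu$-a.e.\ $x$ the empirical averages of $T$-orbits agree asymptotically with $\sigma$-orbits of $\phi(x)$. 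The physical measure thus depends on the reference measure; unique ergodicity of $A$ is neither asserted nor required. Your route via a uniquely ergodic, mixing, positive-entropy, $\Pi_1$-complete subshift is not obviously impossible, but it piles on nontrivial extra constraints that the paper avoids entirely.
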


\begin{proof} 
 
Consider the set  $$\mathcal{H}alt = \{i \in \N : M_{i} \text{ halts on the empty input}\}$$ of indices of machines that halt on the empty input. It is well known that its complement $\overline{\mathcal{H}alt}$ is a $\Pi_1$-complete set.

\medskip 

\noindent\textbf{Definition of the system.} Let $X=\{0,1\}^{\N}$.  Consider the subshift $A\subset X$ defined by the forbidden pattens of the form $01^{n}0$  where $n\in\mathcal{H}alt$. More precisely, any element of $A$ has the following form
$$
 1^{*}0^{k_{1}}1^{n_{1}}0^{k_{2}}1^{n_{2}}\dots 0^{k_{i}}1^{n_{i}} \dots  
$$
where the $k_{i}$'s are arbitrary, and the $n_{i}$'s are such that machine $M_{n_{i}}$ does not halt on the empty input. 

We construct a computable function $T$ over $X$ for which $A$ is a strong attractor on which $T$ acts as the shift map. The idea to achieve this is simple, we want successive iterations of $T$ to have, in the limit,  the following effect on a given input $x$: to read the blocks of $1$'s, to erase those whose length is the index of a Turing machine that halts, and then make a shift.  Now for the details.

The definition of $T$ is as follows. Let $(M_{e})_{e\in\N}$ be an enumeration of all Turing Machines. Let $x\in X$. Then the $i^\textrm{th}$ bit of $T(x)$, denoted by $T(x)_{i}$, is given by:

\begin{itemize}
	\item  if there exist $j_1,j_2$ such that \begin{enumerate}\item[i)] $j_{1}\leq i < j_{2} \leq 2i$, \item[ii)] $x_{[j_{1},j_{2}]}=01^{l}0$ with $l\leq j_{1}$  and \item[iii)] $M_{l}$ halts on the empty input in at most $j_{1}$ steps,\end{enumerate} then $T(x)_{i}=0$;
		
	\item else $T(x)_{i}=x_{i+1}$. 
\end{itemize}

In other words, first the function $T$ erases in $x$ any block of 1's of length $l$ if it is at a position $i\geq l$ and the machine $M_{l}$ halts on the empty input in at most $i$ steps, and then it shifts the configuration obtained. In particular any block which is not deleted after the first iteration of $T$ is shifted, and will keep being shifted in subsequent iterations until it disappears (see Figure \ref{fig:Pi_1} for an illustration).

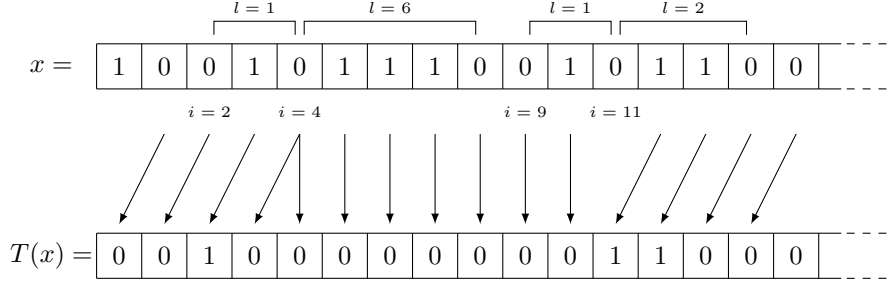
\begin{figure}
    \centering
\begin{tikzpicture}[scale=0.6]

 \draw(-1,0.5) node{$x=$};

\draw (0,0)--(16.5,0);
\draw (0,1)--(16.5,1);

\draw[dashed] (17.5,0)--(16.5,0);
\draw[dashed] (17.5,1)--(16.5,1);

\foreach \i in{0,1,...,16}{
\draw(\i,0)--+(0,1);}
 
\foreach \i in{1,2,4,8,9,11,14,15}{
\draw(\i+0.5,0.5) node{$0$};}
\foreach \i in{0,3,5,6,7,10,12,13}{
\draw(\i+0.5,0.5) node{$1$};}

\begin{tiny}
 \draw(2.5,-0.5) node{$i=2$};
 \draw(4.5,-0.5) node{$i=4$};
 \draw(9.5,-0.5) node{$i=9$};
 \draw(11.5,-0.5) node{$i=11$};

 \draw(3.5,1.8) node{$l=1$};
 \draw(6.5,1.8) node{$l=6$};
 \draw(10.5,1.8) node{$l=1$};
 \draw(13,1.8) node{$l=2$};
\end{tiny}

\draw (2.6,1.2)--(2.6,1.5)--(4.4,1.5)--(4.4,1.2);

\draw (4.6,1.2)--(4.6,1.5)--(8.4,1.5)--(8.4,1.2);

\draw (9.6,1.2)--(9.6,1.5)--(11.4,1.5)--(11.4,1.2);

\draw (11.6,1.2)--(11.6,1.5)--(14.4,1.5)--(14.4,1.2);

\foreach \i in{1,2,3,4,12,13,14,15}{
\draw[->,>=latex](\i+0.5,-1)--++(-1,-2);}
\foreach \i in{4,5,...,10}{
\draw[->,>=latex](\i+0.5,-1)--++(0,-2);}

\begin{scope}[yshift=-4.2cm]
 \draw(-1,0.5) node{$T(x)=$};

 \draw (0,0)--(16.5,0);
\draw (0,1)--(16.5,1);

\draw[dashed] (17.5,0)--(16.5,0);
\draw[dashed] (17.5,1)--(16.5,1);

\foreach \i in{0,1,...,16}{
\draw(\i,0)--+(0,1);}
 
\foreach \i in{0,1,3,4,5,6,7,8,9,10,13,14,15}{
\draw(\i+0.5,0.5) node{$0$};}
\foreach \i in{2,11,12}{
\draw(\i+0.5,0.5) node{$1$};}
\end{scope}

\end{tikzpicture}
    \caption{An illustration of the action of $T$ on a configuration $x$. It is assumed that machines $M_1$ and $M_3$ halt in 8 and 4 steps respectively, while machine $M_2$ doesn't halt at all.}
    \label{fig:Pi_1}
\end{figure}

To see that $A$ is $\Pi_1$-complete set we simply note that
$$A\cap[01^n0]_0\ne\emptyset\Longleftrightarrow n\in \overline{\mathcal{H}alt}. $$

\noindent{\textbf{Properties of $A$.}}  It is clear that $A$ is a topologically mixing subshift of positive entropy and that $T$ acts on $A$ as the shift. We start by showing that $A$ is strongly attracting since in fact it attracts every orbit. 

\Claim{One has
 $$A=\bigcap_{n\in\N}T^n(X),$$
 in particular $\omega(x)\subset A$ for all $x\in X$.}
\bclaimprf
Since $A$ is $T$-invariant, one immediately has $A\subset\bigcap_{n\in\N}T^n(X).$
Let $x\in\bigcap_{n\in\N}T^n(X)$ and suppose $x\notin A$. Then, there exists $j\in\N$ such that $x_{[j,j+l+1]}=01^l0$ and the machine $M_{l}$ halts on the empty input, say,  in less than $t$ steps.  Pick $n\in\N$  such that $n+j\geq \max(t,l)$. Then there exists $y\in X$ for which $T^n(y)=x$.  Note that if  $T(z)$ has a block of $1$'s at position $i$ for a given $z$,  then $z$ must have a block of $1$'s at position $i+1$. Therefore, since $x_{[j,j+l+1]}=01^l0$, one must have $y_{[n+j,n+j+l+1]}=01^l0$. However $T(y)_{[n+j-1,n+j+l]}=0^{l+2}$ since $M_{l}$ halts on the empty input in less than $t$ steps and $n+j\geq \max(t,l)$. We deduce that $x_{[j,j+l+1]}=0^{l+2}$, which is a contradiction.
\eclaimprf

\medskip

We now show that $A$ is the topological and metric attractor of $T$. We do this by showing that for a full measure $G_{\delta}$-dense set $\Lambda$ of initial conditions, we have $\omega(x) = A$, which implies that there cannot be a proper subset of $A$ whose topological basin of attraction is not meager or of positive measure.  

Define $\Lambda\subset X$ as the set of configurations on which any word in the language of $A$ appears infinitely often. Clearly, $\Lambda$ is a $G_{\delta}$ dense set, and by the ergodic theorem, it has  full measure with respect to every shift-ergodic measure of full support. 

\Claim{For every $x\in \Lambda$ one has $\omega(x)=A$.}
 \bclaimprf
 Let $x$ be a configuration in $\Lambda$ and let $u$ be in the language of $A$, there exists $v$ in the language of $A$ which begin and end by $0$ and contains $u$ as subword at the position $k$. 
 By definition of $\Lambda$, there exists a strictly increasing sequence $(i_n)_{n\in\N}$ such that $x_{[i_n,i_n+|u|-1]}=v$ for all $n\in\N$. Since the word $v$ is just shifted as a sequence of admissible block of $1$ by the action of $T$, one deduces that $T^{i_n+k}(x)_{[0,|u|-1]}=u$ for all $n\in\N$, that is to say $\omega(x)\cap[u]_0\ne \emptyset$ so
 $$A\subset\omega(x).$$
 
 The previous claim gives the opposite inclusion, thus $\omega(x)=A$ for every $x\in \Lambda$.\eclaimprf

It remains to show that $A$ is also the statistical attractor of $T$. Let $\mu$ be a shift-ergodic measure on $X$. For $x\in X$ and $n\in\N$, we recall that 
$$
\nu_{n}(x)=\frac{1}{n} \sum_{i<n} \delta_{T^{i}(x)}.
$$ 

Define $\phi:X\longrightarrow X$ such that $\phi(x)_i\ne x_i$ if and only if there exist $j_{1}< i < j_{2}$ such that $x_{[j_{1},j_{2}]}=01^{l}0$ and the machine $M_{l}$ halts on the empty input. In other words the function $\phi$ keeps in $x$ only the blocks of 1's whose length correspond to an index of a Turing machine that does not halt on the empty input. The function $\phi$ is continuous on $X\setminus\left\{w1^{\infty}:w\in\{0,1\}^{\ast}\right\}$ so it is measurable. 

The function $\phi$ is not shift invariant since if the machine $l$ halts on the empty input, one has $\phi(\sigma(x))\ne\sigma(\phi(x))$ if $x_{[0,l+1]}=01^l0$. However, if a word $u$ starts by a $0$, $\phi^{-1}([u]_n)$ can be written as an union of cylinders which begin at position $n+1$ so $\sigma^{n-m}\phi^{-1}([u]_n)=\phi^{-1}([u]_m)$  for $n\leq m$. By $\sigma$-invariance of $\mu$, one has $\phi\mu([u]_n)=\phi\mu([u]_m)$. If $u$ does not begin by the symbol $0$, denote $B_i^j$ the set of configurations which have at least one $0$ between the coordinate $i$ and $j$. One has \[\sigma^{n-m}\phi^{-1}([u]_n\cap B_0^n)=\phi^{-1}([u]_m\cap B_{m-n}^m)\] so $\phi\mu([u]_n\cap B_0^n)=\phi\mu([u]_m\cap B_{m-n}^m)$. Moreover one has \[\phi\mu (B_{m-n}^m)\geq\mu(B_{m-n+1}^{m+1})=\mu(B_0^n)\underset{n\to\infty}{\longrightarrow}1,\] 
and thus it follows that the sequence  $(\sigma^n\phi\mu)_{n\in\N}$ converges towards a measure that we denote by $\tilde{\mu}$. Moreover if $\mu$ has full support, the support of $\tilde{\mu}$ is $A$.

\Claim{ 
Let $\mu$ be a shift ergodic probability measure of full support on $X$. Then $$\nu_{n}(x)\underset{n\to\infty}{\longrightarrow}\tilde{\mu}\textrm{ for $\mu$-almost every point $x$}.$$
}
\bclaimprf
Let $\epsilon>0$ and $u\in\{0,1\}^{\ast}$. Consider a number $l$ of a Turing machine which does not halt on the empty input.  

The set $\bigcup_{k\in\N}\sigma^{-k}([01^l0]_0)$ is $\sigma$-invariant. Since $\mu$ is $\sigma$-ergodic and has full support, this set has measure one so there exists $K$ such that $$\mu\left(\bigcup^K_{k=0}\sigma^{-k}([01^l0]_0)\right)\geq 1-\frac{\epsilon}{2}.$$

Consider
$$B=\left(\bigcup_{k=0}^K\sigma^{-k}([01^l0]_0)\right) \cap \sigma^{-(|u|+K+l+2)}\left(\bigcup_{k=0}^K\sigma^{-k}([01^l0]_0)\right).$$

One has $\mu(B)\geq 1-\epsilon$. Consider $x\in \sigma^{-n+K}(B)$. For all $n\geq K$, there exists $j_1,j_2\in\N$ such that 
\[n-K\leq j_1\leq j_1+l+1< n< n+|u|-1< j_2\leq n-K+2K+l+2+|u|\]
\[\textrm{ and }x_{[j_1,j_1+l+1]}=x_{[j_2,j_2+l+1]}=01^l0.\] 

Let $t$ be the biggest halting time obtained considerng Turing machine $M_{l'}$ with $l'\leq 2K+l+2+|u|$.   If $n$ satisfies also $n\geq t+K$, we have that all blocks of 1's contained in $x_{[j_1,j_2+l+1]}$ have length less than $2K+l+2+|u|$ so they are erased by the iteration of $T$ if the corresponding Turing machines halt, since the position $j_1$ is larger than $t$. It follows that 

$$T^n(x)_{[0,|u|-1]}=\phi(x)_{[n,n+|u|-1]}.$$

Thus, for all $x\in X$ and for $n\geq t+K$, one has \[\left|\delta_{T^n(x)}[u]-\mathbf{1}_{[u]}(\sigma^n\phi(x))\right|\leq1-\mathbf{1}_{\sigma^{-n+K}(B)}(x)\] so that

\[\left|\frac{1}{N}\sum_{n=0}^N\delta_{T^n(x)}[u]-\frac{1}{N}\sum_{n=0}^N\mathbf{1}_{[u]}(\sigma^n(\phi(x)))\right|\leq1-\frac{1}{N-K}\sum_{n=K}^N\mathbf{1}_{\sigma^{-n+K}(B)}(x)\]
where the last quantity converges to $1-\mu(B)=\epsilon$ as $N\to \infty$ for $\mu$-almost every $x$. 
We conclude that for $\mu$-almost every $x$ one has 
$$\frac{1}{N}\sum_{n=0}^N\delta_{T^n(x)}[u]\underset{N\to\infty}{\longrightarrow}\tilde{\mu}[u].$$

\eclaimprf

Since $A$ is the support of $\tilde{\mu}$, it follows that it is indeed the statistical attractor of $T$.

\end{proof}

\subsection{$\Pi_{2}$-complete wild attractors}\label{section.Pi2}

In this section we provide two kinds of examples. First, we construct an example of a system with a $\Pi_{2}$-complete invariant set $A$ which is the unique attractor in both the topological and metric sense. Note that according to the upper bounds of Section~\ref{section.UpperBounds}, this attractor cannot be strictly attracting nor can it be a statistical attractor. It is known that the topological and metric attractors of a system do not need to coincide. When they are different, they are sometimes called \emph{wild} attractors \cite{Milnor-attractor,wild96}. As a second kind of example, we adapt our construction to show that computable systems can have wild attractors which, in addition of being different as sets, their computational descriptions have extremely different complexity. 

\begin{theorem}\label{theorem.Pi2}
 There exists a computable map $T:X\to X$ with a $\Pi_{2}$-complete invariant subset $A$ which is the unique attractor of $T$ in both, the topological and metric sense. The reference measure for the latter can be taken to be any shift-ergodic measure of full support.  
 \end{theorem}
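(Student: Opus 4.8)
The plan is to follow the blueprint of the proof of Theorem~\ref{thm:Pi_1-complete}, replacing the $\Pi_1$-complete condition ``$M_l$ halts'' used there by a $\Sigma_2$-complete condition built from the \emph{totality} of a sequence of auxiliary machines. Fix an enumeration $(M_e)_{e\in\N}$ of Turing machines and a computable pairing $\langle\cdot,\cdot\rangle$, and let
\[
 S=\{\,n\in\N:\ \exists k,\ M_{\langle n,k\rangle}\ \text{does not halt on the empty input}\,\},
\]
which is $\Sigma_2$-complete; its complement $\overline S=\{n:\forall k,\ M_{\langle n,k\rangle}\ \text{halts}\}$ is $\Pi_2$-complete. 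I would take $X=\mathcal{A}^{\N}$ for a finite alphabet $\mathcal{A}\supseteq\{0,1\}$ (some extra symbols are needed for bookkeeping, see below) and let $A\subset X$ be the subshift obtained by forbidding the patterns $01^{n}0$ with $n\in S$, together with suitable local constraints on the auxiliary symbols (the precise list being read off from the dynamics below). Then $[01^{n}0]_{0}\cap A\neq\emptyset\iff n\in\overline S$, so $A$ is $\Pi_2$-hard; and $A$ is $\Pi_2$ because a cylinder $[u]_j$ misses $A$ exactly when $u$ contains a forbidden pattern, i.e. iff $\exists n\,(01^{n}0\text{ occurs in }u\ \wedge\ n\in S)$, a $\Sigma_2$ predicate. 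As in Theorem~\ref{thm:Pi_1-complete}, it then suffices to construct a computable $T$ for which $A$ is invariant and for which there is a set $\Lambda$ of configurations, $G_\delta$ and dense and of full measure for every shift-ergodic measure of full support, with $\omega(x)=A$ for all $x\in\Lambda$: then $\rho(A)\supseteq\Lambda$ is generic and of full measure, and no strictly smaller closed set can have a non-meager (resp.\ positive-measure) realm of attraction, since any such set would be met by some $x\in\Lambda$, contradicting $\omega(x)=A$ (the Baire / ergodic-theorem argument of Theorem~\ref{thm:Pi_1-complete}).

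The map $T$ is again of ``shift-and-clean'' type, but the cleaning of a block of $1$'s of length $n$ is no longer governed by a single halting test. Instead $T$ keeps, on auxiliary cells attached to the block, a \emph{stage counter} $k$ (initialised at $0$) together with a simulation of $M_{\langle n,k\rangle}$, advances this simulation by one step at each application of $T$, and each time $M_{\langle n,k\rangle}$ halts it re-expresses the block in clean form $01^{n}0$, releases a clean copy that the shift then carries toward the origin, and increments $k$. Thus, along the orbit of a block of length $n$ appearing in some $x\in\Lambda$, the word $01^{n}0$ is produced at the origin once for every $k$ with $M_{\langle n,k\rangle}$ halting: infinitely often if $n\in\overline S$, and only finitely often if $n\in S$, since past the first divergent stage $k_0=\min\{k:M_{\langle n,k\rangle}\ \text{diverges}\}$ the block stays ``dirty'' forever, simulating $M_{\langle n,k_0\rangle}$. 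To make the reverse inclusion $\omega(x)\subseteq A$ hold, I would additionally give clean blocks a bounded lifetime measured against their position (as the factor $j_2\le 2i$ already does in Theorem~\ref{thm:Pi_1-complete}), so that no block of length $n\in S$ can reach the origin once it has passed stage $k_0$, forcing $01^{n}0$ to disappear from the orbit in the limit. One checks that $T$ is computable in the sense of Section~\ref{symbolic_notation}: there are computable $\alpha,\beta$ with $T(x)_i=\beta\big(x_{[0,\alpha(i)]}\big)$, the auxiliary simulations being advanced in place.

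With $T$ in hand, I would take $\Lambda$ to be the set of configurations in which every word of the language of $A$ occurs infinitely often; exactly as in Theorem~\ref{thm:Pi_1-complete} this is a dense $G_\delta$, and of full measure for every shift-ergodic measure of full support by the ergodic theorem. For $x\in\Lambda$ one gets $\omega(x)\supseteq A$ because each admissible word recurs along the orbit --- in particular, for $n\in\overline S$, the infinitely many length-$n$ blocks of $x$ cause $01^{n}0$ to be produced at the origin infinitely often --- and $\omega(x)\subseteq A$ because, at any fixed position, $T$ produces a forbidden pattern $01^{n}0$ (with $n\in S$) only finitely often. Hence $\omega(x)=A$, so $A$ is both the topological and the metric attractor of $T$, with reference measure any shift-ergodic measure of full support, and it is $\Pi_2$-complete; by the upper bounds of Theorem~\ref{upper_bounds} it is then neither strongly attracting nor a statistical attractor, i.e.\ it is a genuinely ``wild'' attractor of maximal complexity.

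I expect the main obstacle to lie in the design of the second step. One must arrange the auxiliary tracks and the ``clean display / release / lifetime'' rules so that, simultaneously: (i) $T$ is a genuine local, computable, shift-compatible map; (ii) for \emph{every} initial configuration a block of length $n\in S$ contributes $01^{n}0$ at the origin only finitely often --- this encodes the $\Sigma_2$ direction, and is delicate precisely because $n\in S$ is not semi-decidable, so the block must be forced permanently dirty for the structural reason of reaching a divergent stage, never by a computable time-out; (iii) for $n\in\overline S$ the word $01^{n}0$ is really forced back to the origin infinitely often for every $x\in\Lambda$, which requires the clean displays to be wide enough (in position) to be seen as the block crosses the origin, despite the potentially enormous, non-computably-bounded gaps spent simulating the machines $M_{\langle n,k\rangle}$; and (iv) the auxiliary symbols neither alter the complexity of $A$ nor create a proper closed subset with non-meager realm. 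Reconciling (ii) and (iii) under constraint (i) is the technical heart of the construction; the remaining bookkeeping, and the attractor argument itself, are as in Theorem~\ref{thm:Pi_1-complete}.
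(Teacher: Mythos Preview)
Your strategy is genuinely different from the paper's, and the gap you yourself flag in the last paragraph is real and, as stated, unresolved.

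\textbf{What the paper does.} The paper works with $X=\{0,1,S\}^{\N}$ and the $\Pi_2$-complete set $\mathcal{T}ot=\{i:M_i\text{ is total}\}$. Its attractor $A$ is \emph{countable}: configurations with at most one block $01^i0$, with $i\in\mathcal{T}ot$ (plus degenerate $1^*0^\infty$). The map $T$ treats the symbols $S$ as right-moving \emph{gates}. The first $S$ lets a block of $1$'s cross only when nothing lies to its left, so at most one block is ever in front of it. For $k\ge 2$, the $k$-th $S$ lets a block of length $l$ cross only if $M_l$ has halted on all inputs of size $<k$ within the available space. Thus a block of length $l$ passes every gate iff $l\in\mathcal{T}ot$; otherwise it is permanently blocked at gate $k$ for the first $k$ on which $M_l$ diverges. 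The set $G$ of configurations with infinitely many $S$'s and infinitely many blocks of each size is dense $G_\delta$ and of full measure for any shift-ergodic measure of full support, and one checks $\omega(x)=A$ for all $x\in G$. No auxiliary simulation state is attached to blocks: the $\Pi_2$ filtering is performed by the \emph{environment} (the sequence of gates), not by the block.

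\textbf{Where your sketch breaks.} Your mechanism attaches the simulation to the block itself. But each block has a finite lifetime: it shifts left and exits through the origin after roughly $p$ steps if it starts at position $p$. During that lifetime it alternates between long ``dirty'' simulation intervals and isolated ``clean'' instants. For (iii) you need, for every $n\in\overline S$, infinitely many blocks whose clean instant coincides with their crossing of the origin. Your $\Lambda$ only guarantees that $01^n0$ occurs infinitely often in $x$; it gives no control on \emph{where}, hence no control on the phase at which those blocks reach $0$. Since the halting times $t_k$ of $M_{\langle n,k\rangle}$ are unbounded and non-computable, there is no reason the clean instants should hit position $0$ infinitely often. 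The alternative you mention --- ``release a clean copy that the shift then carries toward the origin'' --- requires inserting new cells, destroying the local shift-and-clean structure you rely on for (i) and for the Baire/ergodic argument. Finally, even granting a fix for (iii), the ``permanently dirty'' configurations (a block frozen in the simulation of a divergent $M_{\langle n,k_0\rangle}$, at an arbitrary time step) must all belong to $A$; specifying these by local constraints without disturbing the $\Pi_2$-hardness witnessed by the clean cylinders $[01^n0]_0$ is not automatic.

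\textbf{Comparison.} The paper's gate mechanism decouples the two directions cleanly: passing gate $k$ is a semi-decidable event (halting on finitely many inputs in bounded time), and the universal quantifier over $k$ is realised spatially by the infinitely many gates in $G$. This yields $\omega(x)=A$ without any block-carried state and with a much simpler (countable) $A$. Your approach, if it can be made to work, would give a richer positive-entropy attractor, closer in spirit to Theorem~\ref{thm:Pi_1-complete}; but the phase-alignment problem above is a genuine missing idea, not just bookkeeping.
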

\begin{proof}
 Consider the set  $$\mathcal{T}ot = \left\{i \in \N : M_{i} \text{ is total}\right\}$$ of indices of machines that halt on every input.  It is well known that this is a  $\Pi_{2}$-complete set~\cite{Rogers}. 
 
Let $A\subset \{0,1\}^{\N}$ be  the set of configurations which contains the configuration $0^\infty$, or this configuration with at most one block of the form $01^i0$ with $i\in\mathcal{T}ot$ or configurations which begin by the symbol $1$ and can have at most one change to have $0$ after. In particular $A$ is a countable subshift and it is $\Pi_{2}$-complete since

\[
[01^{i}0]_0\cap A\ne\emptyset \iff i \in \mathcal{T}ot. 
\]

  Let $X=\{0,1,S\}^{\N}$ and  let $\mu$ be a shift ergodic probability measure of full support. We describe a computable map $T: X \to X$ having $A$ as the unique topological and metric attractor on which $T$ acts as the shift. The idea is that given a configuration $x=u_0Su_1Su_2Su_3\dots$ where $u_i\in\{0,1\}^*$ and the symbol $S$ appears at positions $(i_n)_{n\in\N}$, the computation of $T(x)$ is carried on according to the following steps: 
\begin{enumerate}
\item $u_0$ is shifted to the left;
\item the first $S$ \textit{travels to the right at speed one}, i.e., it moves to right one position at each iteration of $T$. While $u_0$ contains $1's$, $S$ pushes to the right $u_1$ together with the rest of the sequence, adding $0's$ to its left to fill the positions that otherwise would have been left with no symbols assigned. If there is no $1's$ in $u_0$, then $S$ copies to its left, symbol by symbol, the entire first block of $1's$ in $u_1$, if any, adding a $0$ at the end of $u_0$ in order to have a valid configuration (all positions with some symbol assigned). In other words, $S$ pushes everything to the right until there are only $0's$ to its left, and then lets the first block of $1's$ on its right to cross, one symbol at a time. Once the entire block has crossed, it then pushes everything to the right again until the block that has just crossed (which is now being shifted to the left) disappears. It then continues in this same manner. 
\item for the $k^{\textrm{th}}$ symbol $S$ with $k\geq 2$, we search all blocks of $1$'s in the first $i_k$ bits of $u_k$ such that the corresponding Turing machines halt on any input of size less than $k$ in at most $i_k$ steps. All these blocks are placed at once on the left of this symbol $S$, so that $S$ ``jumps" to the right of these blocks and ``pushes" the others. 
\end{enumerate}

The function $T$ is clearly computable. Moreover, given a configuration $x$, all symbols $S$ in $x$ are shifted to the right by at least one position when $T$ is applied. 

Let $G\subset X$ be the set of configurations which contain infinitely many symbols $S$, and infinitely many blocks of $1$'s of all lengths. Clearly $G$ is a $G_{\delta}$ set of measure one. We now show that $\omega(x)=A$ for all $x\in G$. 

Consider $x\in G$. Since the first $S$ is shifted to the right and a block of $1$'s crosses the first $S$ only if there is no symbol $1$ before, it follows that $\omega(x)$ is contained in the subset of $X$ which contains at most one block of $1$'s. Moreover $T$ acts as the shift on $\omega(x)$.

Let $l\not\in\mathcal{T}ot$. There exists $k\in\N$ such that the Turing machine $M_l$ does not halt on an input of size $k$. Thus the $k^\textrm{th}$ symbol $S$ of $x$ cannot be crossed by a block of $1$'s of size $l$. It follows that the orbit of $x$ can intersect the cylinder $[01^l0]$ only finitely many times. 

Let $l\in\mathcal{T}ot$. For $k\in\N$, denote by $t_k$ the maximum halting time of $M_l$ over all inputs of size less than $k$. Now consider a block of $1$'s of size $l$, that is preceded by $k$ symbols $S$. Since the length of the word between this block and the $k^\textrm{th}$ symbol $S$ can only decrease, and since all symbols $S$ are shifted to the right, there exists an iteration of $T$ such that  the $k^\textrm{th}$ symbol $S$ is in a position $i\geq t_k$ such that the distance between the symbol $S$ and the end of the block of $1$'s is less than $i$. At this point the block can cross this symbol $S$, and go in a similar way through the remaining $k-1$ symbols $S$. Thus, the block $[01^l0]$ reaches position $0$ and then disappears infinitely many times and so that $\omega(x)\cap [01^l0]\neq \emptyset$.  It follows that that $\omega(x)=A$.

\end{proof}


It is possible to modify the previous construction in view to realizes maps where the topological and metric attractors have completely different complexity.

\begin{theorem}\label{Corollary.DifferentAttractors}
 There exists computable maps $T':X\longrightarrow X$ and $T'':X\longrightarrow X$ such that: 
\begin{itemize}
 \item $T'$ has a computable metric attractor and a $\Pi_2$-complete topological attractor.
 \item $T''$ has a  $\Pi_2$-complete  metric attractor and a computable topological attractor.
\end{itemize}
 \end{theorem}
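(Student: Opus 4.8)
The plan is to build both maps as modifications of the construction in the proof of Theorem~\ref{theorem.Pi2}. Recall that there a block $01^l0$ contributes to $\omega(x)$ precisely when $M_l$ is total \emph{and} $x$ lies in a set $G$ of configurations (infinitely many $S$'s and infinitely many blocks of every length) that is simultaneously comeager and of full measure; this is exactly why the topological and metric attractors coincide there. To force these two notions of genericity apart, I would insert into the rule ``a block of $1$'s may cross the symbol $S$'' an extra clause depending on a property of the configuration that is comeager but null. Concretely, using one auxiliary marker symbol and a sequence of marker patterns $D_0,D_1,D_2,\dots$ of strictly increasing length with $\sum_m f(m)\,2^{-|D_m|}<\infty$ (where $f(m)\ge |D_m|$ is the polynomial size of the window in which $D_m$ is sought), set
\[
E=\bigl\{x:\ D_m\text{ occurs in }x_{[m,\,m+f(m)]}\text{ for infinitely many }m\bigr\}.
\]
A routine Baire--category argument shows $E$ is a dense $G_\delta$, hence comeager, while the Borel--Cantelli lemma gives $\lambda(E)=0$ for $\lambda$ the uniform Bernoulli measure (the cylinder probabilities being summable by the choice of $|D_m|$). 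Thus $E$ is comeager and null and $E^c$ is meager and of full measure; moreover, since the marker symbol is never created by the dynamics, an $S$ of index $j$ at current position $i_j$ encounters $D_{i_j}$ in the window just ahead of it \emph{cofinally often} when $x\in E$, and only finitely often when $x\in E^c$.

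For $T''$ I would \emph{relax} the crossing rule: a block $01^l0$ may cross the $j$-th symbol $S$ either under the original condition ($M_l$ halts on all inputs of size $<j$ in at most $i_j$ steps) \emph{or} whenever $D_{i_j}$ currently occurs in the window of length $f(i_j)$ just to the right of that $S$. If $x\in E^c$ the override fires only finitely often, no $S$ of large index is ever unblocked by it, and the analysis of Theorem~\ref{theorem.Pi2} applies verbatim, so $\omega(x)=A$ with $A$ the same $\Pi_2$-complete subshift; hence $A$ is the metric attractor. If $x\in E$ then every $S$ is eventually crossed by every block regardless of totality, so $\omega(x)$ is the countable subshift $B$ obtained from the description of $A$ by allowing blocks $01^i0$ of \emph{every} length $i$ (not only $i\in\mathcal{T}ot$)---a computable closed set; hence $B$ is the topological attractor. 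For $T'$ I would dually \emph{tighten} the crossing rule: a block may cross the $j$-th symbol $S$ only if the original condition holds \emph{and} $D_{i_j}$ occurs in the window ahead of it. If $x\in E$ the extra clause is eventually met at every $S$, the behaviour is exactly that of Theorem~\ref{theorem.Pi2}, and $\omega(x)=A$; hence $A$ ($\Pi_2$-complete) is the topological attractor. If $x\in E^c$ then every $S$ of large index is never crossed by any block, so each block is eventually either shifted off the left end or pushed to the right forever by a stuck $S$; therefore $T^n(x)\to 0^\infty$ and $\omega(x)=\{0^\infty\}$, which is computable; hence $\{0^\infty\}$ is the metric attractor.

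To conclude I would check the routine points: $A$ is $\Pi_2$-complete exactly as in Theorem~\ref{theorem.Pi2}, while $B$ and $\{0^\infty\}$ are manifestly computable; the modified maps are computable because the gate is decided from a bounded window of the current tape; and the claimed set really is the attractor of the stated type, which reduces to the identity $\omega(x)=(\text{claimed set})$ on the relevant comeager/full-measure set of initial conditions, the minimality clauses of Definition~\ref{attractor_top} then being automatic because the complementary set of initial conditions is meager, respectively null.

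The main obstacle will be the dynamical bookkeeping behind that last identity: one must verify that the gate, whose status is recomputed at every step from the current configuration, behaves coherently while blocks travel leftward and the symbols $S$ travel rightward, so that on the ``gate open'' side no block that ought to survive is destroyed and every admissible block does resurface at position $0$ infinitely often (so $\omega(x)$ is all of $A$, resp.\ $B$, not a proper subset), while on the ``gate closed'' side \emph{every} block is eventually removed, with none left permanently trapped between two $S$'s at a bounded position and thereby polluting the $\omega$-limit. Pinning down the windows $f(m)$ and the lengths $|D_m|$ so that the comeager/null dichotomy for $E$ matches these two regimes on the nose is the technical heart of the argument.
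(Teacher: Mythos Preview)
Your high-level strategy---separate the topological and metric attractors by routing the crossing mechanism through a gate whose ``open infinitely often'' locus is a comeager null set---is exactly the idea the paper uses. The implementations, however, diverge in a way that matters.

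The paper works on a product $X_1\times X_2$ with $X_1=\{0,1,S\}^{\N}$ and $X_2=\{a,b\}^{\N}$, letting the second layer evolve by the plain shift. The gate condition (``the first $S$ at current position $i$ sees $a^{2i}$ on layer~2'') then translates mechanically into a condition on the \emph{initial} configuration, because the relative motion of the first $S$ (speed $+1$) against the shifted second layer (speed $-1$) is completely determined. This is what makes the comeager-null set $G'=\bigcap_t U_t$ coincide exactly with the set of initial conditions for which the gate opens infinitely often. The paper also modifies only the \emph{first} $S$ (for $T'$) or adds an \emph{insertion} mechanism at the second $S$ (for $T''$), leaving the totality-filtering at later $S$'s untouched.

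Your single-track proposal with ``one auxiliary marker symbol'' does not yet achieve this correspondence, and this is a genuine gap rather than a matter of tuning $f(m)$ and $|D_m|$. On one track, the marker symbols must either be pushed rightward by the $S$'s (so the window ahead of the $j$-th $S$ contains only the finitely many markers originally in $u_j$, and the gate closes for good at \emph{every} $S$ regardless of whether $x\in E$), or be held fixed / shifted left (in which case they collide with the $\{0,1,S\}$-symbols occupying the same cells). Either way, the assertion ``the $j$-th $S$ at position $i_j$ encounters $D_{i_j}$ in the window ahead cofinally often iff $x\in E$'' fails or is undefined. What you actually need is a second track evolving independently by the shift---i.e.\ precisely the product structure the paper introduces. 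Once you add that, your tightening construction for $T'$ becomes essentially the paper's $T'$ (restricted to the first $S$, which suffices), and your relaxation idea for $T''$ is a reasonable alternative to the paper's insertion trick, though you would still need to argue that on $E$ every block eventually meets each $S$ at a moment when the gate is open, given that the $k$-th $S$ (for $k\ge 2$) can jump and may skip the positions $m$ at which $D_m$ is present.
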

\begin{proof}
Consider $X_1=\{0,1,S\}^{\N}$ and $X_2=\A^{\N}$ where $\A=\{a,b\}$. Define $X=X_1\times X_2$ and denote $\lambda$ be the uniform Bernoulli measure on $X$. For a sequence $x\in X$ we refer to the symbols in $X_1$ and $X_2$ as in the first and second layers, respectively. 

For $t\in\N$, on the space $X$, consider the dense open set $U_t$ defied by 
\[
U_t=\bigcup_{s\in\N} U_{s,t}
\quad
\textrm{ where }
\quad
U_{s,t}=\bigcup_{u_1\in\{0,1\}^{s-1}}
\quad
\smashoperator[r]{\bigcup_{
\substack{u_2\in\mathcal{A}^{\ast}, \\
\textrm{ with }|u_2|\geq t}} }
\quad [u_1S]_0\times [u_2a^{|u_2|+s}]_0.
\]

One has 
$$\lambda(U_{s,t})\leq\frac{1}{3}\times\left(\frac{2}{3}\right)^{s-1}\times\sum_{t'\geq t}\left(\frac{1}{2}\right)^{t'+s} =\left(\frac{2}{3}\right)^s\times\frac{1}{2^{t+s}}\leq\left(\frac{2}{3}\right)^s\times\frac{1}{2^{t}}.$$

So $\lambda(U_t)\leq \frac{3}{2^t}$ and clearly $G'=\bigcap_{k\in\N}U_{k}$ is a dense $G_\delta$ of measure $0$.

Consider $T':X\longrightarrow X$ be the computable map such that $T'$ acts on $X_2$ as the shift and acts on $X_1$ as the function defined in Theorem~\ref{theorem.Pi2} except for the first $S$: a block of $1's$ in the first layer can cross the first $S$ at position $i$ only if one of the following conditions hold: either the word $a^{2i}$ appears on the second layer at position $i$, or the word $a^{2i+1}$ appears on the second layer at the position $i+1$. 

Since the first $S$ travels to the right at speed one, if at time 0 it is at position $s$ in the initial configuration, at time $t$ it will be at position $s+t$. Thus, this first $S$ meets the word $a^{2(t+s)}$ (resp.  $a^{2(t+s)+1}$) at time $t$ at position position $t+s$ (resp. $t+s+1$) exactly when this word is initially at position $2t+s$ (resp. $2t+s+1$) in $x$. In other words, when $x\in U_{s,2t+s}$ (resp. $x\in U_{s,2t+s+1})$. See Figure~\ref{fig:attra} for an illustration.

\begin{figure}[h!]
    \centering
\begin{tikzpicture}[scale=0.55]

\foreach \i in{0,1,...,4}{
 \fill[red!15] (11-\i,\i)rectangle(18,\i+0.5);
 \fill[blue!15] (12-\i,\i+0.5)rectangle(18,\i+1);

\draw[gray,dashed] (17,\i)--(18,\i);
 \draw(\i+3.5,\i+0.5) node{$S$};

 }
\draw[gray,dashed] (17,5)--(18,5);
 \draw[gray] (0,0) grid (17,5);

 \draw(0,0.5) node[left]{$x=$};
 \draw(0,4.5) node[left]{$T'^4(x)=$};

\begin{tiny}
\draw(3.5,0)--(3.5,-0.2);
 \draw(3.5,-0.5) node{$s=3$};
 
  \draw(7.5,0)--(7.5,-0.2);
 \draw(7.5,-0.5) node{$s+t=7$};
 
   \draw(11.5,0)--(11.5,-0.2);
 \draw(11.5,-0.5) node{$s+2t=11$};

  \draw(18.3,0.75) node[right]{$\in U_{s,s+2t+1}=U_{3,11}$};
  \draw(18.3,0.25) node[right]{$\in U_{s,s+2t}=U_{3,12}$};
 \end{tiny}
 
\end{tikzpicture}
    \caption{An illustration of the action of $T'$ on a configuration $x$ following that it belongs to $U_{s,s+2t}$ or $U_{s,s+2t+1}$. The red rectangle corresponds to the word $a^{s+2t}$ and the blue rectangle to  the word $a^{s+2t+1}$  following that $x$ is $U_{s,s+2t}$ or $U_{s,s+2t+1}$.}
    \label{fig:attra}
\end{figure}
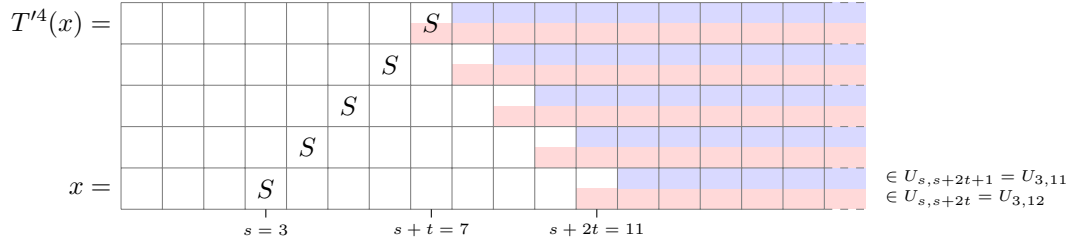

It follows that the first $S$ of a configuration $x$ allows a block of $1's$ to cross it infinitely often if and only if there exist $s\in\N$ and a strictly increasing sequence $(t_i)_i\in\N$ such that $x\in U_{s,t_i}$ for all $i\in\N$. That is, exactly when $x\in G'$. 

Now, let $G\subset G'$ be the dense $G_\delta$ set of measure $0$ made from  configurations of $G'$ for which the first layer contains an infinity of $S$ and an infinity of blocks of $1's$ of all sizes. For a configuration $x\in G$, the first $S$ is crossed infinitely often by blocks of $1's$. Thus, following the proof of Theorem~\ref{theorem.Pi2}, $\omega(x)=A\times \{a,b\}^{\N}$ where $A$ is the attractor of the function defined in the previous Theorem. It follows that $A\times \{a,b\}^{\N}$ is the topological attractor of $T'$. Now, if $x\notin G'$, there is only a finite number of  blocks of $1's$ that can cross the first $S$, so almost surely $\omega(x)=\{0^{\infty}\}\times\{a,b\}^{\N}$, which is therefore the metric attractor of $T'$. This proves the first bullet point of the corollary.

Now consider $T'':X\longrightarrow X$ defined as follows. It acts on $X_2$ as the shift and acts on $X_1$ as the function defined in the proof of Theorem~\ref{theorem.Pi2}, with the following modifications. The second $S$ in a configuration $x$ will now behave as in $T$, but in addition, it will check whether the first $S$ satisfies one of the following conditions: if $i$ is the position of the first $S$, then either the word $a^{2i}$ appears in the second layer at position $i$, or $a^{2i+1}$ appears in the second layer at position $i+1$. In this case, the function $T''$ inserts, starting at the position where the second $S$ is located, the concatenation of the first $i$ blocks of $1's$ (in lexicographic order), pushing to the right the rest of the configuration (including the second $S$) as many positions as needed in order to fit the inserted word.

Now, observe that for almost all $x\in X$, the conditions that the second $S$ check can be verified only finitely many times. Thus, $\omega(x)=A\times X_2$ for almost every $x\in X$, which makes $A\times X_2$ the metric attractor of $T''$. 

On the other hand, for $x\in G'\subset X$ every block of $1's$ will appear to the left of the second $S$ infinitely many times. Thus, $\omega(x)=A'\times X_2$ where  $A'\subset \{0,1\}^{\N}$ is  the set of configurations which contain at most one block of the form $01^i0$ with $i\in\N$. The set $A'$ is clearly computable, and therefore so is $A'\times X_2$, which is the topological attractor of $T''$. 

\end{proof}

\subsection{A $\Sigma_{2}$-complete statistical attractor}

\begin{theorem}
 There exists a computable map $T:X\to X$ with a $\Sigma_{2}$-complete statistical attractor.
\end{theorem}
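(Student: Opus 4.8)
The statistical attractor has the $\Sigma_2$ upper bound from Theorem~\ref{upper_bounds}(iii), so we must build a computable symbolic system whose physical measure has support exactly equal to a $\Sigma_2$-complete set, and such that the associated basin of Birkhoff-generic points has full measure for a natural reference measure. The natural target is $\mathcal{C}of=\{i\in\N: M_i \text{ has cofinite domain}\}$, or its complement, which is $\Sigma_2$-complete; we encode it geometrically. Mirroring the mechanism of Theorem~\ref{theorem.Pi2}, the idea is to make a block $01^i0$ appear in the limit set of a typical orbit \emph{if and only if} $M_i$ fails to halt on cofinitely many inputs (equivalently, infinitely many inputs diverge). The subtlety, compared to the topological/metric constructions, is that we need the word $01^i0$ not merely to be hit infinitely often along the orbit, but to be hit with \emph{positive frequency} along the orbit of almost every point, so that it lies in the support of $\nu_x=\tilde\mu$.

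\textbf{Key steps, in order.} First I would fix $X=\{0,1,S\}^{\N}$ (or add auxiliary markers) and a shift-ergodic reference measure $\lambda$ of full support (e.g. uniform Bernoulli), and define $A$ to be the subshift of configurations that are $0^\infty$ possibly decorated by at most one block $01^i0$ with $i\in \overline{\mathcal{C}of}$ (the $\Sigma_2$-complete side), together with the auxiliary ``frozen'' configurations needed to make $A$ closed and shift-invariant; by construction $[01^i0]_0\cap A\neq\emptyset \iff i\in\overline{\mathcal{C}of}$, giving $\Sigma_2$-completeness of $A$ as an inner-collection reduction. Second, I would design $T$ so that along a typical orbit the ``surviving'' blocks of $1$'s are exactly those indexed by machines with infinitely many diverging inputs: as in Theorem~\ref{theorem.Pi2}, the $k$-th marker $S$ lets a block $01^l0$ cross only after witnessing that $M_l$ halts on all inputs of size $<k$ within a time bound read off from the marker's position; a block whose index $l\in\overline{\mathcal{C}of}$ will forever fail at some marker, hence survives and gets shifted back to position $0$ infinitely often — but now I must also control the \emph{density} of such visits. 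Third — the crucial quantitative step — I would introduce a ``renewal'' or spacing mechanism (e.g. reserve long runs of $0$'s as separators, or have the markers periodically re-inject the admissible blocks) so that for the reference measure $\lambda$, the ergodic theorem applies to a transfer map $\phi:X\to X$ (analogous to the $\phi$ of Theorem~\ref{thm:Pi_1-complete}) that erases exactly the non-surviving blocks, and so that $\nu_n(x)\to \tilde\mu:=\lim_n \sigma^n_*\phi_*\lambda$ for $\lambda$-a.e.\ $x$, with $\mathrm{supp}(\tilde\mu)=A$. Fourth, I would verify $\tilde\mu$ is well defined (the limit of $\sigma^n_*\phi_*\lambda$ exists by the same ``a $0$ appears eventually with probability one'' argument as before), is the \emph{unique} invariant measure with $\nu_x=\tilde\mu$ a.e., and has support $A$ — the last point uses that every admissible block $01^l0$, $l\in\overline{\mathcal{C}of}$, gets copied through all markers infinitely often along a.e.\ orbit and with positive frequency.

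\textbf{Main obstacle.} The hard part is the frequency control in step three: in the topological/metric theorems it suffices that a surviving block reaches position $0$ infinitely often, but for a physical measure we need $\liminf_N \frac1N\#\{n<N: T^n(x)\in[01^l0]_0\}>0$ for a.e.\ $x$, which requires a careful accounting of how long each crossing/erasing phase lasts against how often fresh copies of the block are produced along the orbit. I expect to handle this exactly as in the proof of Theorem~\ref{thm:Pi_1-complete}: bound $|\delta_{T^n(x)}[u]-\mathbf 1_{[u]}(\sigma^n\phi(x))|$ by the indicator of a small-measure ``bad'' event (configurations whose relevant markers have not yet traveled past the relevant halting-time threshold), make that event have $\lambda$-measure $<\epsilon$ using $\sigma$-ergodicity and full support of $\lambda$, and then invoke Birkhoff's theorem for $(\sigma,\lambda)$ applied to $\mathbf 1_{[u]}\circ\phi$-type observables to conclude $\nu_n(x)[u]\to\tilde\mu[u]$ for every cylinder $[u]$, hence $\nu_n(x)\to\tilde\mu$ weakly, for $\lambda$-a.e.\ $x$. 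Checking that $T$ as modified is still genuinely computable (the markers must schedule infinitely many erasing tasks without deadlock) is routine but must be stated, as in the earlier constructions.
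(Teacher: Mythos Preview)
There is a genuine gap at the very first step: your target index set is at the wrong level of the arithmetical hierarchy. The set $\mathcal{C}of=\{i:M_i\text{ has cofinite domain}\}$ is $\Sigma_3$-complete (this is a classical fact in recursion theory), so its complement $\overline{\mathcal{C}of}=\{i:\text{infinitely many inputs diverge}\}$ is $\Pi_3$-complete, not $\Sigma_2$-complete. Moreover, the marker mechanism you borrow from Theorem~\ref{theorem.Pi2} filters for $\mathcal{T}ot$: a block passes the $k$-th marker $S$ only after verifying that $M_l$ halts on all inputs of size $<k$, so the blocks that cross \emph{every} marker are exactly those with $l\in\mathcal{T}ot$, a $\Pi_2$-complete set. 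Your sentence ``a block whose index $l\in\overline{\mathcal{C}of}$ will forever fail at some marker, hence survives'' inverts the logic of that mechanism --- failing at a marker means the block does \emph{not} reach position $0$ --- and in any case would not single out a $\Sigma_2$-complete set.

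The paper takes a different and simpler route. It works on $X=\{0,1\}^\N$ with no markers, and targets $\mathcal{F}in=\{i:M_i\text{ has finite domain}\}$, which \emph{is} $\Sigma_2$-complete. The erasing rule reads the number $k$ of zeros immediately preceding a block $01^l0$: the block is erased (once it is far enough to the right) if $M_l$ halts on some input of size $\geq k$. Thus a block preceded by $0^k$ survives the shift all the way to position $0$ exactly when the domain of $M_l$ is contained in inputs of size $<k$; such a witness $k$ exists iff $l\in\mathcal{F}in$. The crucial point is that the preceding block of zeros serves as the existential witness for the outer $\exists$ in the $\Sigma_2$ definition of $\mathcal{F}in$ --- this is what your marker mechanism cannot supply. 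From there the part you correctly anticipated goes through: one defines a measurable projection $\phi'$ erasing exactly the forbidden blocks, shows (as in Theorem~\ref{thm:Pi_1-complete}) that $|\delta_{T^n(x)}[w]-\mathbf 1_{[w]}(\sigma^n\phi'(x))|$ is bounded by the indicator of a small $\sigma$-invariant event, and concludes $\nu_n(x)\to\tilde\mu$ by Birkhoff's theorem for the shift.
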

\begin{proof}
 Consider the set  $$\mathcal{F}in = \{i \in \N : M_{i} \text{ has a finite domain}\}$$ of indices of machines that halt on a finite number of inputs.  It is well known that this is a  $\Sigma_{2}$-complete set. 
 
 Let $X=\{0,1\}^{\N}$, we describe a dynamics $T:X \to X$  having an invariant closed set $A$ which is the support of an invariant measure $\nu$ such that $\nu_{x} = \nu$ for almost every $x$ (with respect to a shift ergodic probability measure of full support). $A$ will therefore be the statistical attractor of $T$.
 
Define $A$ as the set of points $x\in X$ which do not contain any of the words of the form  $10^k1^{n}0$ for which the $n$-th Turing Machine halts on any input of size larger than $k$, nor any of the words of the form $01^n0$ for which $n\notin \mathcal{F}in$. This set is a closed shift invariant set. Moreover,  since
 $$
[01^{n}0]_0\cap A \ne\emptyset \iff n \in \mathcal{F}in,
$$
$A$ is $\Sigma_{2}$-complete.

The definition of $T$ is as follows. For $x\in X$, the $i^\textrm{th}$ bit of $T(x)$, denoted by $T(x)_{i}$, is given by:

\begin{enumerate}
	\item  if there exist $j_0<j_{1}\leq i < j_{2} \leq 2i$ such that $x_{[j_{1},j_{2}]}=01^{l}0$ with $l<j_{1}$,  $x_{[j_{0},j_{2}]}=10^k1^{l}0$ and the Machine $M_{l}$ halts on any input of size between $k$ and $j_1$ in at most $j_{1}$ steps, then $T(x)_{i}=0$.
		
	\item otherwise, $T(x)_{i}=x_{i+1}$. 
\end{enumerate}

In other words, the function $T$ deletes in $x$ every block of $1's$ of length $l$ that satisfies the following condition: if the block appears at position $i$ and is preceded by a block of $0's$ of size $k\leq i$, then the machine $M_{l}$ halts, in at most $i$ steps, on at least one input whose size is between $k$ and $i$. We remark that if a block is not deleted by one iteration of $T$, then it will be shifted so that it will eventually reach the position $0$. It is clear that such $T$ is a computable function.

Define $\phi':X\longrightarrow X$ such that $\phi'(x)_i\ne x_i$ if and only if there exist $j_{1}< i < j_{2}$ such that $x_{[j_{1},j_{2}]}=01^{n}0$ and the Machine $M_{n}$ has not a finite domain (that is to say $n\notin \mathcal{F}in$) or if $x_{[j_{1},j_{2}]}=10^k1^n0$ and the machine $M_{n}$ halts on an input of size larger than $k$. The function $\phi'$ is continuous on $X\setminus\left\{w1^{\infty}:w\in\{0,1\}^{\ast}\right\}$ so it is measurable. As in the proof of Theorem~\ref{thm:Pi_1-complete}, the function $\phi'$ is not shift invariant but if $\mu$ is shift invariant, the sequence  $(\sigma^n\phi'\mu)_{n\in\N}$ converges towards a measure denoted $\tilde{\mu}$. Moreover if $\mu$ has full support, the support of $\tilde{\mu}$ is $A$.

Let $\mu$ be a probability measure of full support on $X$ which is shift ergodic. We are going to show that $$T^n\mu\underset{n\to\infty}{\longrightarrow}\tilde{\mu}.$$

Let $\epsilon>0$ and $w\in\{0,1\}^{\ast}$. Consider two integers $k,l\in \N$ such that the Turing machine $M_l$ does not halt on any input of size larger than $k$. 

The set $\bigcup_{i\in\N}\sigma^{-i}([0^k1^l0]_0)$ is $\sigma$-invariant. Since $\mu$ is $\sigma$-ergodic and has full support, this set has measure one so there exists $K\geq k+l+1$ such that 
\[\mu\left(\bigcup^K_{i=0}\sigma^{-i}([0^k1^l0]_0)\right)\geq 1-\frac{\epsilon}{2}.\]

Consider
$$B=\left(\bigcup_{i=0}^K\sigma^{-i}([0^k1^l0]_0)\right) \cap \sigma^{-|w|+2K}\left(\bigcup_{i=0}^K\sigma^{-i}([0^k1^l0]_0)\right).$$

One has $\mu(B)\geq 1-\epsilon$.  Consider all pairs $(k',l')\in[0,3K+|w|]^2$ such that $M_{l'}$ halts on at least some input of size larger than $k'$ and denote by $w_{l',k'}$ the shortest input larger than $k'$ where $M_{l'}$ halts. Let now $N$ be the maximum between the biggest halting time on the inputs $w_{l',k'}$ and $|w_{l',k'}|$ for all these pairs. 

Consider $x\in\sigma^{-n}(B)$ for $n\geq K+N$, there exists $j_1,j_2\in\N$ such that 
\[n-K\leq j_1\leq j_1+k+l\leq j_1+K< n< n+|w|-1< j_2\leq n+2K+|u|\]
\[\textrm{ and }x_{[j_1,j_1+k+l]}=x_{[j_2,j_2+k+l]}=0^k1^l0.\]
 Thus all blocks of $1's$ contained in $x_{[j_1,j_2+k+l]}$ have length smaller than $2K+|w|$, so they are erased by $T$ if the corresponding Turing machines halts on at least some input of size larger than the number of $0$ that precedes them. As a block of $1's$ cannot disappear after one iteration of $T$, it follows that 
 \[T^n(x)_{[0,|w|-1]}=\phi'(x)_{[n,n+|w|-1]}\].
 
 Thus, for all $x\in X$, one has

$$\left|\delta_{T^n(x)}[w]-\mathbf{1}_{[w]}(\sigma\phi'^n(x))\right|\leq1-\mathbf{1}_{\sigma^{-n+K}(B)}(x),$$ it follows that for $\mu$-almost $x$, one has

\[\left|\frac{1}{N}\sum_{n=0}^N\delta_{T^n(x)}[w]-\frac{1}{N}\sum_{n=0}^N\mathbf{1}_{[w]}(\sigma^n(\phi'(x)))\right|\leq1-\frac{1}{N-K}\sum_{n=K}^N\mathbf{1}_{\sigma^{-n+K}(B)}(x)\underset{N\to\infty}{\longrightarrow}1-\mu(B)=\epsilon.\]

We conclude that for $\mu$-almost every $x$ it holds that
$$\frac{1}{N}\sum_{n=0}^N\delta_{T^n(x)}[w]\underset{N\to\infty}{\longrightarrow}\tilde{\mu}[w]$$

and that the support of $\tilde{\mu}$ is exactly $A$.

\end{proof}

\section{Interval maps with computationally complex attractors}\label{interval}

In this section we develop a simple construction allowing to embed any computable dynamics over a symbolic space into a computable dynamics of the unit interval  in a way that preserves metric and statistical attractors. We then use this construction to provide examples of computable maps of the interval exhibiting computationally complex attractors.   


\begin{theorem}\label{theorem.EmbeddingCantorTransformationInIntervalMaps} Let $T:\{0,1\}^{\N}\to\{0,1\}^{\N}$ be a computable transformation. Then there exists a computable cantor set $\C\subset [0,1]$, a computable homeomorphism $\phi:\{0,1\}^{\N}\to \C$ and a computable map $f:[0,1] \to [0,1]$ preserving $\C$ such that:
\begin{enumerate}
\item\label{a}  $\phi$ conjugates $T$ to $f$ over $\C$,
\item\label{b}  for Lebesgue almost every $x\in [0,1]$, $\omega_{f}(x)\subset \C$,
\item\label{c} if $T$ has a metric or statistical attractor, then  $\mathcal{A} = \phi(A) \subset \C$ is an attractor for $f$ of the same type.  
\end{enumerate}
\end{theorem}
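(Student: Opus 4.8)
The plan is to realise $f$ as a \emph{blow-up} of $T$: replace each configuration $z\in\Cantor$ by a point of a Cantor set $\C\subset[0,1]$, declare $f|_{\C}$ to be the conjugate copy of $T$, and fill the complementary gaps with piecewise-affine branches engineered so that Lebesgue-almost every point of a gap falls into $\C$ after finitely many steps, transporting the relevant invariant objects along. First I would fix a computable \emph{symmetric} Cantor set: to each word $w\in\{0,1\}^{\ast}$ assign a closed interval $I_w$ with $I_{\emptyset}=[0,1]$, let $I_{w0},I_{w1}$ be the two extreme subintervals of $I_w$ separated by a central gap $G_w$, and impose $|I_{w0}|=|I_{w1}|$ together with gap ratios $|G_w|/|I_w|$ summable in $|w|$ and uniformly small. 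Then $\C=\bigcap_n\bigcup_{|w|=n}I_w$ is a computable Cantor set of positive Lebesgue measure, $\phi(z)=\bigcap_n I_{z_{[0,n-1]}}$ is a computable homeomorphism onto $\C$, and by symmetry $\phi$ pushes the uniform Bernoulli measure forward to the normalised Lebesgue measure on $\C$; this is what makes the reference measures in (\ref{c}) correspond. I would also record two elementary facts for later: $\overline{G_w}$ has endpoints $\phi(w01^{\infty})$ and $\phi(w10^{\infty})$; and there is $p_0>0$ with $\Leb(\C\cap J)\geq p_0\,|J|$ for every short enough interval $J$ having an endpoint in $\C$, a uniform lower-density estimate following from $\Leb(\C\cap I_u)\geq\Leb(\C)\,|I_u|$ for all $u$.

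Next I would define $f$. On $\C$ set $f:=\phi\circ T\circ\phi^{-1}$, which is a computable continuous self-map of $\C$ with $f(\C)\subset\C$, establishing (\ref{a}). On a gap $\overline{G_w}=[a_w,b_w]$ continuity already forces $f(a_w)=\phi(T(w01^{\infty}))=:L$ and $f(b_w)=\phi(T(w10^{\infty}))=:R$, both in $\C$, and we have no further control over $L,R$. If $L=R$, set $f\equiv L$ on the gap; otherwise, with $\Delta=|L-R|$, define $f$ on $[a_w,b_w]$ as a three-branch ``zig-zag with overshoot'': from $L$ it moves a distance $\tfrac{1}{10}\Delta$ past the nearer endpoint, then monotonically across to a distance $\tfrac{1}{10}\Delta$ past the farther endpoint, then back to $R$. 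By construction each of the three affine branches has a target interval with an endpoint in $\C$, hence of relative $\C$-measure $\geq p_0$. Since the overshoot amplitude is $O(\Delta)$ and $\Delta\to 0$ whenever the words involved converge to a non-eventually-constant sequence (because $T$ is continuous, so $T(w01^{\infty}),T(w10^{\infty})\to T(z)$ as $w\to z$), the map $f$ is continuous at every point of $\C$, which one checks separately at gap endpoints and at the remaining points. Uniform computability of $f$ is then routine once all interval data are chosen computable in $w$, with the sole care that when $\Delta$ drops below the requested output precision one simply returns $L$.

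The measure-theoretic core is short. Let $E$ be the set of points of $[0,1]\setminus\C$ whose entire forward orbit stays in the gaps. Partitioning the points that remain in the gaps for $n$ steps according to their affine-branch itinerary, and using that every gap-branch has target of relative $\C$-measure $\geq p_0$, a branch-by-branch comparison gives $\Leb(E_n)\leq(1-p_0)\Leb(E_{n-1})$, whence $\Leb(E)=0$. Thus almost every $x\notin\C$ enters $\C$ at some finite time and stays, giving $\omega_f(x)\subset\C$; with $\omega_f(\phi(z))=\phi(\omega_T(z))\subset\C$ this is (\ref{b}). Writing $\ell(x)\in\C$ for the landing point, $\ell$ is a countable concatenation of affine maps followed by restriction to $\C$, so $\ell_{\ast}\bigl(\Leb|_{[0,1]\setminus\C}\bigr)$ is absolutely continuous with respect to $\Leb|_{\C}$; equivalently $\phi^{-1}\circ\ell$ pushes Lebesgue to a measure absolutely continuous with respect to the Bernoulli measure. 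Now, if $A$ is the metric attractor of $T$: for $x\in\C$, $\omega_f(x)=\phi(\omega_T(\phi^{-1}x))$, and for a.e.\ $x\notin\C$, $\omega_f(x)=\phi(\omega_T(\ell(x)))$; since $\rho_T(A)$ is conull for the Bernoulli measure, the absolute continuity above makes $\rho_f(\phi(A))$ conull for Lebesgue, and if a proper closed subset $\phi(A')$ of $\phi(A)$ had $\rho_f$ of positive Lebesgue measure, then $\rho_T(A')$ would have positive Bernoulli measure, contradicting minimality of $A$. Hence $\phi(A)$ is the metric attractor of $f$. For the statistical attractor the argument is identical with $\omega$-limit sets replaced by empirical measures: if $\mu$ is the physical measure of $T$, then $\nu^{f}_n(\phi(z))=\phi_{\ast}\nu^{T}_n(z)$ while for a.e.\ $x\notin\C$ the finitely many iterates before landing contribute $O(1/n)$, so $\nu^{f}_n(x)\to\phi_{\ast}\mu$ for Lebesgue-a.e.\ $x$, the measure $\phi_{\ast}\mu$ is $f$-invariant with support $\phi(A)$, and its uniqueness is inherited, so $\phi(A)$ is the statistical attractor of $f$.

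The step I expect to be the real obstacle is the design of the gap maps, together with its measure-theoretic payoff: the gap behaviour is pinned at both endpoints to the uncontrolled points $\phi(T(w01^{\infty})),\phi(T(w10^{\infty}))$; it must shrink at the right rate as the gaps accumulate on $\C$ in order to preserve continuity; it must stay uniformly computable; and it must have a uniform escape probability into $\C$ and an absolutely continuous landing distribution. Reconciling all of these is exactly what the overshoot construction and the uniform lower-density bound for $\C$ achieve; once they are in place, verifying (\ref{a})--(\ref{c}) is essentially bookkeeping.
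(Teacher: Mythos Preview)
Your overall architecture matches the paper's almost exactly: fat symmetric Cantor set of positive measure, $\phi$ pushing the Bernoulli measure to normalised Lebesgue on $\C$, $f=\phi\circ T\circ\phi^{-1}$ on $\C$, and a piecewise-affine continuous extension across each gap pinned at $L=\phi(T(w01^{\infty}))$ and $R=\phi(T(w10^{\infty}))$. The difference is in how you fill the gaps, and that difference breaks your escape argument.

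In the paper the gap $[a,b]$ is mapped onto a full \emph{level interval} $I'=I_u$ chosen (via the modulus of continuity of $T$) to contain both $L$ and $R$; the three affine pieces go $L\to p\to q\to R$ where $p,q$ are the endpoints of $I'$. The crucial consequence is that the \emph{turning values} $p,q$ lie in $\C$. Hence every break point of $f$ is sent into $\C$, and therefore every connected component of $E_{n}$ is an interval on which $f^{n}$ is affine \emph{and onto a full gap}. This is exactly what makes the bound $\Leb(E_{n})\le (3/4)\,\Leb(E_{n-1})$ iterate: at each stage one is again looking at full gaps, for which the one-step estimate applies verbatim.

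Your overshoot values $L-\tfrac{\Delta}{10}$ and $R+\tfrac{\Delta}{10}$ have no reason to lie in $\C$. So for $n\ge 2$ your branch-by-branch comparison asks for a lower bound on $\Leb(\C\cap f^{n}(C))/|f^{n}(C)|$ where $C$ is a length-$n$ itinerary cylinder; but $f^{n}(C)=\beta_{n-1}\bigl(\text{im}(\beta_{n-2})\cap\text{dom}(\beta_{n-1})\bigr)$ is in general a \emph{proper} subinterval of $\text{im}(\beta_{n-1})$ with neither endpoint in $\C$, and such a subinterval can lie entirely inside a gap, giving $\C$-density zero. Concretely, if $L-\tfrac{\Delta}{10}$ falls deep inside some gap $G'$, then the first branch of $G$ followed by a branch of $G'$ produces a cylinder whose $f^{2}$-image is an arbitrary subinterval of that second branch's target, with no density control. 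Thus the inequality $\Leb(E_n)\le(1-p_0)\Leb(E_{n-1})$ is unjustified beyond $n=1$. (Two smaller slips compound this: the middle branch image $[L-\tfrac{\Delta}{10},R+\tfrac{\Delta}{10}]$ does \emph{not} have an endpoint in $\C$, contrary to what you assert; and declaring $f\equiv L$ on gaps with $L=R$ destroys non-singularity of $f$, which you use later for part~(\ref{c}).)

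The fix is exactly the paper's: replace the ad hoc overshoot by an overshoot to the endpoints of a computable level interval $I_u\ni L,R$, so that $f(\text{gap})=I_u$. Then every break value lies in $\C$, components of $E_n$ map onto full gaps, and your argument (and your absolute-continuity treatment of part~(\ref{c}), which is otherwise fine and in fact a bit more explicit than the paper's) goes through.
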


\proof 
Let $\Leb$ denote  Lesbesgue measure over $[0,1]$. We construct a fat Cantor set $\C$ with Lebesgue measure $1/2$. Of course the number $\frac{1}{2}$ could be replaced by any other positive number smaller than $1$. 
Let $a_n$ be a computable sequence of positive real numbers such that $\sum_n a_n = 1$ and let $b_n=2^{-a_n}<1$. Then, 
$$
\prod_n b_n=2^{-\sum_n a_n} = \frac{1}{2}.
$$ 

For every word $w\in\{0,1\}^*$, we define a closed interval $I_w \subset [0,1]$ inductively on $|w|$.  Start by setting $I_\epsilon=[0,1]$, where $\epsilon$ is the empty word.  Assume $I_w$ has been defined. Then,  $I_{w0}$ and $I_{w1}$ are constructed by removing an open segment centred at the middle of $I_w$, such that 
$$
\Leb(I_{w0})+\Leb(I_{w1})=b_{|w|}\Leb(I_w).
$$  

We denote by 
$$
\C_{n} = \bigcup_{|w|=n} I_{w}
$$ 
the collection of the $2^{n}$ intervals constructed at stage $n$.  The cantor set $C$ is then defined by
$$
\C=\bigcap_n \C_{n}. 
$$
Note that, by construction, 
$$
\Leb(\C_{n+1}) = \sum_{|w|=n}b_{n}\Leb(I_{w}) = b_{n}\Leb(\C_{n})
$$ 
and therefore 
$$
\Leb(\C)=\prod_n b_n=\frac{1}{2}.
$$
The extreme points of each $I_w$ are computable real numbers, uniformly in $w$. Thus $\C$ is  a computable set. 

For any given finite binary sequence $w$, an easy computation shows that
$$
\Leb(I_{w}\cap \C_{|w|+k})=2^{-|w|} \prod_{i\ < |w|+k }b_{i}
$$
and thus we have that the relation
\begin{equation}\label{restriction}
\Leb(I_{w}\cap \C)=2^{-|w|} \Leb (\C)
\end{equation}
holds. 

Let $\phi:\{0,1\}^\N\to \C$ be the homeomorphism defined by 
$$
\phi(x)=\bigcap_{n}I_{x_{[0,n]}}. 
$$

It is clear that $\phi$ is computable. Note that by equation (\ref{restriction}), $\phi$ sends the uniform measure $\uni$ over $\{0,1\}^{\N}$ to the  Lesbesgue measure $\Leb$ conditioned to $\C$:  
$$
\Leb(\cdot|\C)=\frac{\Leb(\cdot\cap \C)}{\Leb(\C)}. 
$$ 

Moreover, since 
\begin{equation}\label{interval.measure}
\Leb(I_{w})=2^{-|w|}\Leb(\C_{|w|})=2^{-|w|}\prod_{i< n}b_{i}<2^{-|w|},
\end{equation}
 we see that $d(x,y)\leq 2^{-n}$ implies $|\phi(x)-\phi(y)|\leq 2^{-n}$, where $d(x,y)$ denotes the usual distance on binary sequences. In the oposite direction, however, observe that even if $|\phi(x)-\phi(y)|\leq 2^{-(n+1)}$, $\phi(x)$ and $\phi(y)$ may lie in different segments of $\C_{n}$ and thus $d(x,y)$ could be greater than $2^{-n}$.  On the other hand, since the smallest gaps between segments of $\C_{n}$\footnote{that is, between $I_{w0}$ and $I_{w1}$, where $|w|=n-1$} have length 
$$
2^{-(n-1)}  (1-b_{n-1}) \prod_{i < n-1} b_{i},  
$$
we have that 
\begin{equation}\label{distortion}
|\phi(x)-\phi(y)| < 2^{-n}(1-b_{n-1}) \implies d(x,y)\leq 2^{-n}. 
\end{equation}
Now, for $\alpha \in \C$, define $f$ by 
$$
f(\alpha)=\phi \circ T \circ \phi^{-1} (\alpha)
$$
so as to make $f$ conjugate to $T$ on $\C$.  We now explain how to define $f$ on the gaps.  Since $T$ is computable and $\{0,1\}^{\N}$ is recursively compact, it has a computable modulus of uniform continuity. In particular, we can compute a function $m:\N\to\N$ satisfying 
$$
m(n) \nearrow  \infty  \quad \text{ as } \quad n \to \infty 
$$
and such that
$$
d(T(x),T(y))\leq 2^{-m(n)}\quad \text{ whenever }\quad d(x,y)\leq2^{-n}.
$$
Let $w$ be a finite binary word of length $n$ defining a segment  $I_{w}$ of $\C_{n}$ and let
$$
a=\phi(w01^{\infty}); \qquad b=\phi(w10^{\infty}) 
$$
be the extreme points of the gap between the two consecutive segments $I_{w0}$ and $I_{w1}$ of $\C_{n+1}$.  Then
$$
d(w01^{\infty},w10^{\infty})\leq 2^{-n}
$$ 
and thus 
$$
d(T(w01^{\infty}),T(w10^{\infty})) \leq 2^{-m(n)}.  
$$
It follows that $f(a)=\phi(T(w01^{\infty}))$ and $f(b)=\phi(T(w10^{\infty}))$ both belong to a same segment $I'$ of $\C_{m(n)}$. We then define $f$ on $[a,b]$ in a piecewise linear and continuous way so as to have 
$$
f([a,b])=I'
$$
with at most 3 pieces in such a way that at least half of $[a,b]$ gets linearly mapped to $I'$ (see Figure~\ref{fig:my_label}).

\begin{figure}
    \centering
\begin{tikzpicture}[scale=0.7]

\begin{tiny}

\draw (3,2)--(4,1.5)--(5,4)--(6,3);

\draw (3,2) node{$\bullet$};
\draw (6,3) node{$\bullet$};

\draw[dotted] (0,4)--(8,4);

\draw[dotted]  (0,1.5)--(8,1.5);

\draw[->,>=latex] (0,-0.5)--(0,5);
\draw[->,>=latex] (-0.5,0)--(10,0);
 
\draw (3,0)--(3,-0.2);
\draw (3,-0.2) node[below]{$a$};
\draw (6,0)--(6,-0.2);
\draw (6,-0.2) node[below]{$b$};

\draw (0,2)--(-0.2,2);
\draw (-0.2,2) node[left]{$f(a)$};
\draw (0,3)--(-0.2,3);
\draw (-0.2,3) node[left]{$f(b)$};

\draw[{[-]}] (1.5,-0.8)--(3,-0.8);
\draw (2.25,-0.8) node[below]{$I_{w0}$};

\draw[{[-]}] (6,-0.8)--(7.5,-0.8);
\draw (6.75,-0.8) node[below]{$I_{w1}$};
 
\draw[{[-]}] (-1.4,1.5)--(-1.4,4);
\draw (-1.4,2.75) node[left]{$I'$};
 
\end{tiny}
\end{tikzpicture}

    \caption{The function $f$ on the gap $[a,b]$ between intervals $I_{w0}$ and $I_{w1}$ of $C$.}
    \label{fig:my_label}
\end{figure}
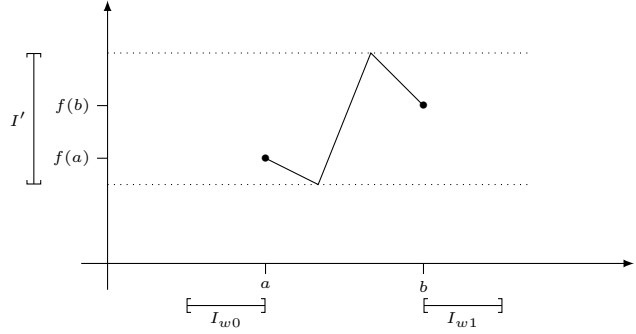

Note that $f$ is well defined and continuous. In particular, by equation (\ref{distortion}) we have that for any $x,y$ in $[0,1]$:
$$
|f(x)-f(y)| \leq 2^{-m(n)} \quad   \text{ whenever } \quad |x-y|\leq 2^{-s(n)}
$$
where $s(n):\N\to\N$ is an increasing function such that $$2^{-s(n)}\leq 2^{-n}(1-b_{n-1}).$$  This shows part (\ref{a}) of the Theorem.

To prove part (\ref{b}) note that, by construction, the proportion of points in the complement of $\C$ that fall in $\C$ after one iteration of $f$ is at least a fourth.  Indeed, since $f$ sends more than half of any given gap $[a,b]$ onto a whole segment $I^{k}$ of a certain level $\C_{k}$ in a linear way, it follows that 
$$
\Leb\left([a,b] \bigcap f^{-1}(\C) \right) = \Leb\left([a,b] \bigcap f^{-1}(\C \cap I^{k}) \right) \geq \frac{\Leb([a,b])}{2} \frac{\Leb(\C\cap I^{k})}{\Leb(I^{k})}$$
which by equations (\ref{restriction}) and (\ref{interval.measure}) equals 
\begin{equation}\label{eq:proportion}
\frac{\Leb([a,b])2^{-k}\Leb(\C)}{2^{-k-1}\Leb(\C_{k})}\geq\frac{\Leb[a,b]}{4},
\end{equation}
so we obtain
\begin{equation}
\Leb\left( ([0,1]\setminus\C)\cap f^{-1}\C\right) \geq \frac{\Leb([0,1]\setminus\C)}{4}.
\end{equation}
Since $\C$ is invariant under $f$, it follows that the Lesbesgue measure of the set of points that remain in the complement of $\C$ after $n$ iterations of $f$ is at most $\left(\frac{3}{4}\right)^{n}$, and thus $\omega(x)\subset \C$ for almost every $x\in[0,1]$. 

To prove (\ref{c}), recall that $\phi^{-1}$ sends $\Leb(\cdot | \C)$ to $\uni$, and thus if $\uni(\phi^{-1}E)=0$, then $\Leb(E)=0$. Moreover, $f$ is clearly a non-singular map for $\Leb$:
$$
\Leb(E)=0 \implies \Leb(f^{-n}E)=0  \qquad  \text{ for all }\quad n.
$$
Thus, no set other than $\A$ can have a basin of attraction of positive measure, which is therefore the metric attractor for $f$ whenever $A$ is the metric attractor for $T$.

Finally, we show that if $T$ has a statistical attractor $A$, then $\mathcal{A}=\phi(A)$ is a statistical attractor for $f$. Let $\tilde{\mu}$ be the unique measure whose support is $A$ and consider over $[0,1]$ the push forward measure $\mu =\tilde{\mu}\circ  \phi^{-1} $. It is clear that for almost every point  $x\in\C$, we have $\nu_x=\mu$. Suppose there exists a positive measure set $E\subset [0,1]\setminus \C$ such that $\nu_x\neq\mu$ for all $x\in E$. Let $E_1=\{x\in E: f(x)\in \C \}$ be the set of points in $E$ that fall in $\C$ after one iteration of $f$. Note that by inequality (\ref{eq:proportion}), $\lambda(E_1)>0$. But since $f$ is non-singular, $f(E_1)$ would be a positive measure set in $\C$ whose elements satisfy $\nu_x\neq \mu$, a contradiction.

\endproof


\begin{corollary}
There exist a computable map $f:[0,1]\to [0,1]$ and an invariant set $\mathcal{A}\subset [0,1]$ such that:
\begin{enumerate}
    \item $\mathcal{A}$ is $\Pi_1$-complete;
    \item $\mathcal{A}$ is a transitive  metric and statistical attractor for $f$;
\end{enumerate}  
In particular, $f$ has a unique physical measure which is not computable.
\end{corollary}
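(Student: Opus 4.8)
The plan is to apply the embedding of Theorem~\ref{theorem.EmbeddingCantorTransformationInIntervalMaps} to the computable symbolic system furnished by Theorem~\ref{thm:Pi_1-complete}, and then transport every required property through the resulting computable conjugacy. Concretely, let $T:\{0,1\}^{\N}\to\{0,1\}^{\N}$ and $A\subset\{0,1\}^{\N}$ be the map and invariant subshift of Theorem~\ref{thm:Pi_1-complete}, and let $\C\subset[0,1]$, $\phi:\{0,1\}^{\N}\to\C$ and $f:[0,1]\to[0,1]$ be the objects obtained by applying Theorem~\ref{theorem.EmbeddingCantorTransformationInIntervalMaps} to $T$; set $\mathcal{A}=\phi(A)$. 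Since $\phi$ conjugates $T$ to $f$ on $\C$ and $A$ is $T$-invariant, $\mathcal{A}$ is $f$-invariant. The uniform measure $\uni$ on $\{0,1\}^{\N}$ is shift-ergodic and of full support, so by Theorem~\ref{thm:Pi_1-complete} the set $A$ is both the metric and the statistical attractor of $T$ with respect to $\uni$, and $\sigma|_{A}=T|_{A}$ is topologically mixing. By part~(\ref{c}) of Theorem~\ref{theorem.EmbeddingCantorTransformationInIntervalMaps}, $\mathcal{A}$ is then both the metric and the statistical attractor of $f$ with respect to Lebesgue measure; and since $f|_{\mathcal{A}}$ is, via $\phi$, topologically conjugate to the topologically mixing (hence transitive) shift $\sigma|_{A}$, the $\phi$-image of a transitive point of $\sigma|_{A}$ has dense orbit in $\mathcal{A}$, so $\mathcal{A}$ is transitive. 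Finally, the unique physical measure of $f$ is $\mu=\tilde{\mu}\circ\phi^{-1}$, where $\tilde{\mu}$ is the physical measure of $T$; as pushing a measure forward by a computable map is a computable operation and $\phi,\phi^{-1}$ are computable, computability of $\mu$ would entail computability of $\tilde{\mu}$, contradicting Theorem~\ref{thm:Pi_1-complete}. Hence $\mu$ is not computable.

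It remains to establish that $\mathcal{A}$ is $\Pi_{1}$-complete. For the upper bound, recall that $A$, being $\Pi_{1}$, is upper computable and hence recursively compact by Proposition~\ref{prop.equivalence.comp.closed.subset}; its image $\mathcal{A}=\phi(A)$ under the computable map $\phi$ is therefore recursively compact as well (to semi-decide $\mathcal{A}\subset U_{I}$ one semi-decides $A\subset\phi^{-1}(U_{I})$, a recursively open set, using recursive compactness of $A$), and so, again by Proposition~\ref{prop.equivalence.comp.closed.subset}, $\mathcal{A}$ is upper computable, i.e.\ $\Pi_{1}$. For completeness it suffices, since $A$ is $\Pi_{1}$-complete and reductions compose, to reduce $A$ to $\mathcal{A}$: given an enumeration of the outer collection of $\mathcal{A}$ we obtain an enumeration of a family of rational intervals whose union is the open set $[0,1]\setminus\mathcal{A}$. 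For any finite word $w$ one has $\phi([w]_{0})=\C\cap I_{w}$, hence $\mathcal{A}\cap I_{w}=\phi(A\cap[w]_{0})$, and since $\phi$ is injective, $A\cap[w]_{0}=\emptyset$ if and only if the closed dyadic interval $I_{w}$ is contained in $[0,1]\setminus\mathcal{A}$; as $I_{w}$ is a computable compact subinterval of the recursively compact space $[0,1]$, this inclusion is semi-decidable relative to the above enumeration. Enumerating those $w$ that pass the test produces the outer collection of $A$, completing the reduction, and therefore $\mathcal{A}$ is $\Pi_{1}$-complete.

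The argument is essentially bookkeeping: the dynamical content lies entirely in Theorems~\ref{thm:Pi_1-complete} and~\ref{theorem.EmbeddingCantorTransformationInIntervalMaps}, and the measure-theoretic transfer is handled by part~(\ref{c}) of the latter once one checks that $\uni$ meets its hypotheses. The only step requiring a little care is the last one, namely that the computable conjugacy sitting inside the computable ambient Cantor set preserves the exact level $\Pi_{1}$ in \emph{both} directions; here the distinction between open ideal balls, their closures, and the closed dyadic intervals $I_{w}$ must be tracked, but this is routine given the recursive-compactness tools recalled in Section~\ref{section.ComputableAnalysis}.
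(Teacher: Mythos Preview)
Your proof is correct and follows exactly the approach of the paper, which simply says ``Apply Theorem~\ref{theorem.EmbeddingCantorTransformationInIntervalMaps} to the construction of Theorem~\ref{thm:Pi_1-complete}.'' You have in fact supplied more detail than the paper does: the transfer of $\Pi_{1}$-completeness through the computable homeomorphism $\phi$ and the non-computability of the physical measure via pushforward are left entirely implicit in the paper's one-line proof, and your arguments for both points are sound.
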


\begin{proof}
Apply Theorem \ref{theorem.EmbeddingCantorTransformationInIntervalMaps} to the construction of Theorem \ref{thm:Pi_1-complete}. 
\end{proof}

\begin{remark}
We note that the map $f:[0,1]\to[0,1]$ given by the above Corollary has a $G_\delta$-dense set of points in $[0,1]$ that never enter the cantor set $\C$ under iteration by $f$.  Therefore, the attractor $\mathcal{A}$ of $f$ is not a topological attractor, nor it is strongly attracting, despite that so is the set $A$ for $T$. 
\end{remark}

\begin{corollary}
There exist a computable map $f:[0,1]\to [0,1]$ with a $\Pi_2$-complete metric attractor $\mathcal{A}$.
\end{corollary}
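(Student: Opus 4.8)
The plan is to feed the $\Pi_2$-complete example of Theorem~\ref{theorem.Pi2} into the embedding machinery of Theorem~\ref{theorem.EmbeddingCantorTransformationInIntervalMaps}. First I note that the construction in the proof of Theorem~\ref{theorem.EmbeddingCantorTransformationInIntervalMaps} applies verbatim to a computable transformation $T:\mathcal{A}^{\N}\to\mathcal{A}^{\N}$ over any finite alphabet $\mathcal{A}$: one only replaces the dyadic subdivision of $[0,1]$ (one interval $I_w$ per one-letter extension of $w$) by an $|\mathcal{A}|$-adic one, with the uniform Bernoulli measure on $\mathcal{A}^{\N}$ playing the role of $\uni$, and all of the estimates~(\ref{restriction})--(\ref{distortion}) go through unchanged. (Alternatively, one first conjugates the ternary system of Theorem~\ref{theorem.Pi2} to a computable map on $\{0,1\}^{\N}$ through a computable homeomorphism given by a complete prefix code, checking that the relevant Bernoulli reference measure pulls back to a shift-ergodic measure of full support, so that Theorem~\ref{theorem.Pi2} still identifies the image of $A$ as the metric attractor.)

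Applying this to the map $T$ of Theorem~\ref{theorem.Pi2} and its metric attractor $A$, which is a $\Pi_2$-complete closed set, produces a computable fat Cantor set $\C\subset[0,1]$, a computable homeomorphism $\phi:\mathcal{A}^{\N}\to\C$, and a computable map $f:[0,1]\to[0,1]$ for which $\mathcal{A}:=\phi(A)$ is the metric attractor, by part~(\ref{c}) of Theorem~\ref{theorem.EmbeddingCantorTransformationInIntervalMaps}. It then remains only to check that $\mathcal{A}$ is again $\Pi_2$-complete. That $\mathcal{A}$ is $\Pi_2$ is immediate from part (ii) of Theorem~\ref{upper_bounds}, since $f$ is computable and $\mathcal{A}$ is its metric attractor with respect to Lebesgue measure. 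For $\Pi_2$-hardness we transfer the completeness of $A$ through the conjugacy: for every word $w$ one has $\phi([w]_0)=I_w\cap\C$, hence $\mathcal{A}\cap I_w=\phi(A\cap[w]_0)$, so that $\mathcal{A}\cap I_w\neq\emptyset$ if and only if $A\cap[w]_0\neq\emptyset$. Since the endpoints of the intervals $I_w$ are computable uniformly in $w$, and since the distortion bounds~(\ref{interval.measure}) and~(\ref{distortion}) relate rational subintervals of $[0,1]$ to cylinders of $\mathcal{A}^{\N}$ in a uniformly computable way, one gets a computable translation in both directions between the inner (resp.\ outer) collection of ideal balls of $\mathcal{A}$ and that of $A$. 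Composing the reduction that witnesses the $\Pi_2$-completeness of $A$ with this translation shows that every $\Pi_2$ closed set reduces to $\mathcal{A}$; together with the upper bound this gives $\Pi_2$-completeness of $\mathcal{A}$.

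The only part of the argument that is not automatic is this last piece of bookkeeping. An ideal ball of $[0,1]$ is a rational open interval and need not be aligned with the intervals $I_w$; in particular it may straddle a gap of $\C$, so ``$B_i\cap\mathcal{A}\neq\emptyset$'' cannot be read off a single cylinder. One deals with this by using that $\mathcal{A}\subset\C$ together with the uniform computability of the endpoints of the $I_w$'s to cover $B_i\cap\C$, whenever the covering exists, by finitely many intervals $I_w$ at a suitable depth (the existence of such a finite cover being semidecidable by recursive compactness of $\C$), and then translating each piece to its corresponding cylinder; the gap-length estimate underlying~(\ref{distortion}) is exactly what guarantees that this can be carried out consistently at every scale. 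This is the purpose for which the metric geometry of $\C$ was controlled so carefully in the proof of Theorem~\ref{theorem.EmbeddingCantorTransformationInIntervalMaps}, so no new idea is needed; only a somewhat tedious verification.
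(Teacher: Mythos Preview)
Your proof is correct and follows the same route as the paper, which simply says to apply Theorem~\ref{theorem.EmbeddingCantorTransformationInIntervalMaps} to the construction of Theorem~\ref{theorem.Pi2}. You are more careful than the paper in two respects it glosses over: you explicitly handle the alphabet mismatch (Theorem~\ref{theorem.Pi2} works on $\{0,1,S\}^{\N}$ while Theorem~\ref{theorem.EmbeddingCantorTransformationInIntervalMaps} is stated for $\{0,1\}^{\N}$), and you spell out why $\Pi_2$-completeness transfers through the computable homeomorphism $\phi$---both are genuine points that deserve the sentence or two you give them.
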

\begin{proof}
Apply Theorem \ref{theorem.EmbeddingCantorTransformationInIntervalMaps} to the construction of Theorem \ref{theorem.Pi2}. 
\end{proof}

\begin{remark}
We note that by Theorem \ref{upper_bounds}, the set $\mathcal{A}$ cannot be a statistical attractor. Indeed, in virtue of the properties of the map $T$ constructed in Theorem \ref{theorem.Pi2}, the statistical attractor of $f$ is the singleton $\{0\}$. 
\end{remark}

\end{document}